\documentclass[11pt,a4paper]{amsart}
\usepackage[margin=1.1in]{geometry}
\usepackage{amsmath,amssymb,amsthm,mathtools}
\usepackage{tikz}
\usepackage[colorlinks=true,linkcolor=blue!60!black,citecolor=blue!60!black,urlcolor=blue!60!black]{hyperref}
\usepackage{booktabs}

\newtheorem{theorem}{Theorem}[section]
\newtheorem{lemma}[theorem]{Lemma}
\newtheorem{corollary}[theorem]{Corollary}
\newtheorem{proposition}[theorem]{Proposition}
\newtheorem{conjecture}[theorem]{Conjecture}
\newtheorem{question}[theorem]{Question}
\theoremstyle{definition}
\newtheorem{observation}[theorem]{Observation}
\theoremstyle{remark}
\newtheorem{remark}[theorem]{Remark}

\DeclarePairedDelimiter\ceil{\lceil}{\rceil}
\DeclarePairedDelimiter\floor{\lfloor}{\rfloor}
\newcommand{\chie}{\chi_{=}}
\newcommand{\chies}{\chi_{=}^{*}}
\newcommand{\E}{\mathbb{E}}

\newcommand{\Prb}[1]{\mathbb{P}\left[#1\right]}
\newcommand{\IA}{I_{\mathrm{A}}}
\newcommand{\IM}{I_{\mathrm{M}}}
\newcommand{\IB}{I_{\mathrm{B}}}

\title{A square-root law for equitable coloring}

\author{Mohammad F. Marashdeh}
\address{Department of Mathematics, Mutah University, Karak, Jordan}
\email{marashdeh@mutah.edu.jo}
\subjclass[2020]{Primary 05C15; Secondary 05C35, 05C80, 05C85}
\keywords{Equitable coloring, equitable chromatic threshold, maximum degree, bipartite graphs, probabilistic method, linear-time algorithm}

\date{\today}

\begin{document}
\raggedbottom

\begin{abstract}
An equitable $k$-coloring of a graph partitions its vertex set into $k$ independent sets whose sizes differ by at most one; the least such $k$ is the equitable chromatic number $\chie(G)$. Every known bound on $\chie$ valid for all graphs, beginning with the Hajnal--Szemer\'edi theorem, is linear in the maximum degree $\Delta$, and the star $K_{1,\Delta}$, for which $\chie=\ceil{\Delta/2}+1$, shows that no general bound below $\Delta/2$ exists. We prove that this obstruction is a shortage of vertices rather than an effect of the degree: every graph with $|V(G)|\ge3\chi(G)\Delta$ satisfies $\chie(G)=O\bigl(\chi(G)^{3/2}\sqrt{\Delta/\ln\Delta}\bigr)$ throughout the range $\chi(G)\le(\Delta/\ln\Delta)^{1/3}$, so that for graphs of large order the degree enters only through $\sqrt{\Delta/\ln\Delta}$, with the chromatic number governing the rest. For each fixed $\ell$, $\ell$-colorable graphs of sufficiently large order satisfy $\chie\le\bigl(2\sqrt2\,\ell\sqrt{\ell-1}+o(1)\bigr)\sqrt{\Delta/\ln\Delta}$, while a probabilistic construction supplies graphs of arbitrarily large order, bipartite when $\ell=2$, with $\chie\ge\tfrac13\sqrt{(\ell-1)\Delta/\ln\Delta}$: the order of growth $\Theta\bigl(\sqrt{\Delta/\ln\Delta}\bigr)$ is exact for every fixed chromatic number, and the extremal constant is determined up to a factor $O(\chi(G))$. All upper bounds are constructive, and a prescribed-anchor variant of the construction produces equitable colorings of bipartite graphs with $O(\sqrt\Delta)$ colors in optimal linear time.
\end{abstract}

\maketitle

\section{Introduction}\label{sec:intro}

All graphs are finite and simple. A proper $k$-coloring of a graph $G$ partitions $V(G)$ into $k$ independent sets, its \emph{color classes}; it is \emph{equitable} if the sizes of any two classes differ by at most one, equivalently if every class has size $\floor{n/k}$ or $\ceil{n/k}$, where $n:=|V(G)|$. The least $k$ for which $G$ admits an equitable $k$-coloring is the \emph{equitable chromatic number} $\chie(G)$. Because a graph that is equitably $k$-colorable need not be equitably $(k+1)$-colorable (for instance $K_{3,3}$, which is equitably $2$- but not $3$-colorable), one also studies the \emph{equitable chromatic threshold} $\chies(G)$, the least $k_0$ such that $G$ is equitably $k$-colorable for \emph{every} $k\ge k_0$. Always $\chie\le\chies$, and the two can differ by a factor of order $\Delta$: by Proposition~\ref{prop:threshsharp}, $K_{a,a}$ with $a$ odd has $\chie=2$ and $\chies=\Delta+1$. Which of the two a given result bounds therefore matters, and every statement below specifies it: Theorem~\ref{thm:upper} bounds $\chies$, and Theorems~\ref{thm:multi}, \ref{thm:uniform} and~\ref{thm:smallzeta} bound $\chie$.

Equitable colorings originate in a question of Erd\H{o}s~\cite{Erdos64}, answered by the theorem of Hajnal and Szemer\'edi~\cite{HajnalSzemeredi70}: every graph is equitably $k$-colorable whenever $k\ge\Delta(G)+1$, where $\Delta(G)$ denotes the maximum degree. Hence $\chies(G)\le\Delta(G)+1$ for every graph $G$, and $K_{\Delta,\Delta}$ with $\Delta$ odd shows this is sharp even among bipartite graphs (Proposition~\ref{prop:threshsharp}). Chen, Lih and Wu~\cite{ChenLihWu94} conjectured that $\Delta$ colors suffice for every graph containing neither $K_{\Delta+1}$ nor, for odd $\Delta$, the complete bipartite graph $K_{\Delta,\Delta}$; Lih and Wu~\cite{LihWu96} confirmed this for bipartite graphs. For trees, Chen and Lih~\cite{ChenLih94} determined the equitable chromatic number exactly, and Bollob\'as and Guy~\cite{BollobasGuy83} showed that almost all trees are equitably $3$-colorable, an early indication that few colors suffice for sparse graphs with many vertices.

A different line of work asks for colorings of a bipartite graph whose classes are \emph{balanced}, meeting the two sides equally. Feige and Kogan~\cite{FeigeKogan10} introduced the problem; Chakraborti~\cite{Chakraborti23}, sharpening Axenovich, Sereni, Snyder and Weber~\cite{ASSW21}, proved that, for every $\varepsilon>0$, a balanced bipartite graph with maximum degree $\Delta$ and sufficiently many vertices on each side has a balanced independent set with $(1-\varepsilon)(\ln\Delta/\Delta)n$ vertices on each side, and can be partitioned into $(1+\varepsilon)\Delta/\ln\Delta$ balanced independent sets. That is the closest precedent for the phenomenon studied here; the two problems differ as follows. Balance forces every class to meet the small side, and the domination bound on balanced independent sets then makes $\Theta(\Delta/\ln\Delta)$ classes necessary. Equitability imposes no such constraint: a class may lie wholly inside one side, and the correct order is $\Theta(\sqrt{\Delta/\ln\Delta})$, a different power of $\Delta$. The obstruction behind our lower bound is nevertheless the same domination phenomenon, applied to a bipartition that is as unbalanced as equitability permits. For sparse graphs the analogous question is asked in terms of degeneracy rather than order; see Kostochka and Nakprasit~\cite{KostochkaNakprasit03}.

All of these bounds are linear in $\Delta$, or within a logarithm of it, and the literature offers no reason to expect anything smaller: the star $K_{1,\Delta}$ satisfies $\chie(K_{1,\Delta})=\ceil{\Delta/2}+1$, so no bound better than $\Delta/2$ can hold even among trees.

The star, however, is misleading: its obstruction arises from a shortage of vertices, not from adjacency. With $n=\Delta+1$, every color class lies within a single part, and the larger part alone necessitates $\ceil{\Delta/2}$ classes. This phenomenon must be distinguished from a related but distinct one. In $K_{\Delta,\Delta}$, every class is also confined to one part, yet $\chie=2$, the two parts themselves forming equal-sized classes; the large value pertains to $\chies$, not $\chie$, and stems from arithmetic rather than from a shortage of room (Proposition~\ref{prop:threshsharp}). The star is the extremal example for $\chie$, and it is $\chie$ that is the focus of the present work. The star is \emph{small} relative to its degree, so the two roles of $\Delta$ --- constraining which vertices may share a class, and determining how many classes are needed at all --- are conflated. For graphs with $n\gg\Delta$ these roles separate; the present paper determines the behavior in precisely that regime.

\begin{question}\label{q:main}
Fix $\zeta>1$ and consider bipartite graphs $G$ with $|V(G)|\ge\zeta\Delta(G)$ and $\Delta(G)$ large. Must $\chie(G)$ grow linearly in $\Delta$, as it does for the star, or is it of smaller order, or perhaps even bounded?
\end{question}

Although both alternatives are a priori plausible, neither is correct. A linear answer would make the star a symptom rather than an anomaly, with $\Delta$ governing $\chie$ however much room is available; a bounded answer would mean that with enough room the degree ceases to matter. The correct growth rate lies strictly between them and is one that neither alternative suggests: it is $\sqrt\Delta$, to within a factor $O(\sqrt{\ln\Delta})$. Bipartiteness, moreover, plays no role in the phenomenon: the same law holds whenever the chromatic number is bounded, and fails precisely when it is not. For a class specified by order, maximum degree and chromatic number alone, no bound sublinear in $\Delta$ appears to have been known. The results that follow determine the extremal growth rate up to a constant factor for every fixed chromatic number, identify the chromatic number as the parameter that governs it, and realize a bound of $O(\sqrt\Delta)$ colors by an algorithm of optimal running time.

\begin{theorem}[Adaptive anchors]\label{thm:multi}
Let $\ell\ge2$. For $u>1$ put $h(u):=u\ln\frac{u}{u-1}$, and for $s>1$ put
\begin{equation}\label{eq:Gamma}
c(s)^2\;:=\;2\inf_{1/s<v<1}\frac{h(sv)}{v(1-v)},
\qquad
\Gamma(\ell,\zeta)\;:=\;\ell\sqrt{\ell-1}\;c(\zeta/\ell) .
\end{equation}
For every real $\zeta>\ell$ and every real $\gamma>\Gamma(\ell,\zeta)$ there is $\Delta_0=\Delta_0(\ell,\zeta,\gamma)$ such that every $\ell$-colorable graph $G$ with $\Delta:=\Delta(G)\ge\Delta_0$ and $|V(G)|\ge\zeta\Delta$ satisfies
\[
\chie(G)\;\le\;\ceil*{\gamma\sqrt{\frac{\Delta}{\ln\Delta}}\,}.
\]
\end{theorem}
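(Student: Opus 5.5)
The plan is to build the equitable coloring directly from a proper $\ell$-coloring $V_1,\dots,V_\ell$ of $G$. Set $k:=\ceil{\gamma\sqrt{\Delta/\ln\Delta}}$ and $m:=\floor{n/k}$, so every target class has size $m$ or $m+1$. Most classes will be \emph{pure}, that is, contained in a single part $V_i$, and any subset of $V_i$ is independent. Cutting each $V_i$ into $\floor{|V_i|/m}$ pure blocks therefore leaves a remainder $R_i\subseteq V_i$ with $|R_i|<m$ in each part. These remainders must be absorbed by a few \emph{mixed} classes, and the whole difficulty is that a mixed class must still be independent. Since $m\approx n/k\gg\Delta$ when $n\ge\zeta\Delta$ and $k=o(\sqrt\Delta)$, a remainder is large compared with $\Delta$. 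It cannot simply be dumped into a class of another part.

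The device I would use is an \emph{anchor}. For a remainder in part $i$, choose a part $j\ne i$ and a set $A\subseteq V_j$ of size $a$. Let the mixed class be $A$ together with a set $B\subseteq V_i\setminus N(A)$, with $|A|+|B|=m$. Take $A$ to be a uniformly random $a$-subset of $V_j$, where $|V_j|=s'm$ in units of $m$. A vertex $x\in V_i$ has at most $\Delta$ neighbours in $V_j$, so it survives (has no neighbour in $A$) with probability at least roughly $(1-a/|V_j|)^{\Delta}$. The expected number of survivors is then about $|V_i|\exp(-\Delta\,a/|V_j|)$. Pure domination counting gives the complementary bound: $|N(A)|\le a\Delta$, so $|V_i\setminus N(A)|\ge |V_i|-a\Delta$. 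I would combine the random and the greedy estimates, choosing anchors adaptively (hence ``adaptive anchors''). The aim is to show that with $a=v\,m$ the number of available $B$-vertices is at least $(1-v)m$ plus the needed slack, provided $k^2\gtrsim \Delta/\ln\Delta$ times a factor of the form $h(sv)/(v(1-v))$. Here $s=\zeta/\ell$ is the guaranteed size ratio of a part to a class, and $h(u)=u\ln\frac{u}{u-1}$ arises from evaluating $\ln\frac{1}{1-a/|V_j|}$ with $|V_j|/a=sv$. Optimizing over $v\in(1/s,1)$ produces $c(s)^2$. Because a mixed class has $a+b=m$, the anchor part loses $a$ vertices, which shifts its own remainder. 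I would therefore process parts in a chain or cyclic order, each part absorbing its predecessor's remainder. This uses at most $\ell-1$ mixed classes per part and explains the factor $\ell\sqrt{\ell-1}$ in $\Gamma(\ell,\zeta)$.

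The steps in order are as follows. First, fix the $\ell$-coloring and reduce to the case that every relevant part has size at least about $(\zeta/\ell)\Delta$. Small parts can be wholly absorbed as remainders, since the largest part has size at least $n/\ell\ge(\zeta/\ell)\Delta$. Second, do the arithmetic of choosing the number of pure blocks per part so that the total number of classes is exactly $k$, with sizes in $\{m,m+1\}$. Third, carry out the anchor step with a concentration argument: a bounded-differences or Janson-type bound on the survivor count $|V_i\setminus N(A)|$, or simply an expectation argument plus derandomization by conditional expectations, which also gives constructivity. Fourth, verify that the survivor count exceeds $|R_i|$ after the parameters are substituted. Here $\gamma>\Gamma$ gives a multiplicative slack $1+\varepsilon$, which absorbs the $o(1)$ errors once $\Delta\ge\Delta_0$.

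I expect the main obstacle to be the fourth step, together with the interaction between anchors. Removing $A$ from $V_j$ changes $R_j$, and the survivors must be counted after earlier mixed classes have already consumed vertices. The inequality must hold with the sharp constant $c(\zeta/\ell)$, not just up to $O(1)$. My first attempt would use a single expectation computation with $(1-a/|V_j|)^\Delta\ge\exp\bigl(-\Delta\,\frac{a}{|V_j|}\cdot h(|V_j|/a)\cdot\frac{|V_j|}{a}\bigr)$-type rewriting, and then choose $v$ near the infimiser in \eqref{eq:Gamma}. If the chained dependencies spoil the constant, I would instead select all anchors simultaneously as disjoint random subsets of their parts and apply a union bound over the at most $\ell(\ell-1)$ mixed classes.
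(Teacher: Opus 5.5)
\textbf{There is a genuine gap: mixed classes of the shape you propose cannot exist, and boundedly many of them cannot absorb the remainders.}

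You take anchor sets of size $a=vm$ with $v>1/s$ a constant. So each mixed class has a constant fraction of its $m\approx n/k$ vertices in each of two parts, and you allow at most $\ell-1$ such classes per part.

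Your own survival estimate already rules this out. Since $|V_j|\le n$, it gives a survival probability of about $\exp(-\Delta a/|V_j|)\le\exp\bigl(-(1-o(1))v\Delta/k\bigr)$. But you need $(1-v)m$ survivors out of at most $n$ vertices, a fraction of order $1/k$. Since $\Delta/k\approx\sqrt{\Delta\ln\Delta}/\gamma$ is far larger than $\ln k$, the estimate falls short. Your parameters conflate a class with $\Delta$. The number $\zeta/\ell$ bounds the largest part in units of $\Delta$, not in units of a class. When $n=\Theta(\Delta)$ a class has size $m\approx\zeta\sqrt{\Delta\ln\Delta}/\gamma=o(\Delta)$, contrary to your claim $m\gg\Delta$.

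No cleverer choice of anchors rescues the scheme. Run the random construction of Section~\ref{sec:lower} with $\ell=2$ and $|V_1|$ about $m/2$; the proof of \eqref{eq:eventD} only needs $|V_1|\le\Delta$. With high probability every $s_0=\ceil{2\zeta\ln\Delta}$ vertices of $V_1$ dominate all but $\ceil{\sqrt\Delta}$ vertices of $B$. A class with at least $s_0$ vertices of $V_1$ therefore has size at most $|V_1|+\ceil{\sqrt\Delta}<m$. So every class meets $V_1$ in fewer than $s_0$ vertices. Absorbing $V_1$ then needs at least $|V_1|/s_0\approx k/(4\gamma^2)$ mixed classes, that is $\Theta(k)$, not $\ell-1$.

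This failure is independent of the concentration inequality you use. It also disposes of your fallback with $\ell(\ell-1)$ simultaneous anchors. Your first step, absorbing parts smaller than $(\zeta/\ell)\Delta$ wholesale, fails too, since such parts can have size of order $n/\ell\gg m$.

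\textbf{The missing idea is that the anchors are the residue vertices themselves.} They are absorbed a few at a time into $\Theta(k)$ mixed classes, all completed inside the largest part $B$, the only part guaranteed to satisfy $b\ge n/\ell$. The paper's proof runs as follows.
\begin{itemize}
\item It reserves $t=\floor{\theta k}$ classes with $\theta=\frac1\ell-w$. This leaves a reservoir $\beta=b-t(q+1)\ge(1-\eta)wn$ in $B$.
\item It splits $V_1,\dots,V_{\ell-1}$ into pure classes by Lemma~\ref{lem:multinormal}, leaving residues $M_j\le q$.
\item It absorbs each residue $\rho$ vertices at a time. Lemma~\ref{lem:avg} takes a uniformly random $\rho$-subset $R$ of the residue and applies Jensen to the degrees into the unused part $B'$ of $B$. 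This gives $|B'\setminus N(R)|\ge\beta(1-1/U)^{\rho}$ with $U=(1-\eta)\beta/\Delta\ge(1-\eta)^2w\zeta$.
\end{itemize}
Requiring this to be at least $q$ permits $\rho\approx\frac{U}{h(U)}\ln\frac{\beta}{q}$, which is of order $\zeta\ln k$. This is where $h$ and the saving of $\sqrt{\ln\Delta}$ come from.

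To leading order the count $(\ell-1)q/\rho\le\theta k$ becomes $\gamma^2>2(\ell-1)h(w\zeta)/\bigl(w(\frac1\ell-w)\bigr)$. Optimizing over $w=v/\ell\in(1/\zeta,1/\ell)$ gives exactly $\gamma>\Gamma(\ell,\zeta)$. So in $c(\zeta/\ell)$, $sv=w\zeta$ is the reservoir in units of $\Delta$, and $1-v=\ell\theta$ is the fraction of classes reserved for mixing. The factor $\ell$ comes from $b\ge n/\ell$ and the reservation; the factor $\sqrt{\ell-1}$ comes from the $\ell-1$ residues of size at most $q$.

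Because anchors come from the residue and every completion from $B$, the chained interaction you worried about never arises. The leftover of $B$ fills the remaining classes exactly, by the count in Phase~3 of Lemma~\ref{lem:anchor}. The only wrinkle is that the factor $\tau$ of Lemma~\ref{lem:avg} degrades on the last $O(\rho\ln k)$ vertices of each residue. These are placed using the crude bound $|N(R)|\le|R|\Delta$, at a cost of $O(\ln^2k)$ extra classes per part. No concentration inequality is needed: one good $R$ per class suffices, and it exists by averaging.
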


The constant depends on $\zeta$ only through the ratio $\zeta/\ell$; the governing quantity is the order per unit of chromatic number and degree.

\begin{remark}\label{rem:cfunction}
Since $h$ is decreasing with $h(u)\downarrow1$ as $u\to\infty$ --- both elementary: $h'(u)=\ln\tfrac{u}{u-1}-\tfrac1{u-1}<0$ by $\ln(1+x)<x$, and $h(u)=u\ln\bigl(1+\tfrac1{u-1}\bigr)\to1$ --- the function $c$ is decreasing, $c(s)\le\sqrt{8h(s/2)}$ for $s>2$ (take $v=\tfrac12$), and $c(s)\downarrow2\sqrt2$ as $s\to\infty$, while $c(s)\to\infty$ as $s\downarrow1$. Numerically $c(2)=4.076$, $c(3)=3.487$, $c(4)=3.277$, $c(8)=3.026$ and $c(25)=2.887$. For bipartite graphs this gives $\Gamma(2,4)=8.152$, $\Gamma(2,8)=6.554$, $\Gamma(2,25)=5.899$, and $\Gamma(2,\zeta)\downarrow4\sqrt2=5.657$. Since $c(s)^2$ is an infimum, any test point certifies an upper bound, and the values above are certified to the digits shown; for instance $v=0.574$ gives $c(4)^2\le2h(2.296)/(0.574\cdot0.426)<10.74$, hence $\Gamma(2,8)=2c(4)<6.56<6.6$, as Corollary~\ref{cor:bip} requires.
\end{remark}

Taking $\ell=2$ and $\zeta=8$ (Remark~\ref{rem:cfunction}) gives the simplest instance.

\begin{corollary}\label{cor:bip}
There exists $\Delta_0$ such that every bipartite graph $G$ with $\Delta:=\Delta(G)\ge\Delta_0$ and $|V(G)|\ge8\Delta$ satisfies $\chie(G)\le\ceil*{6.6\sqrt{\Delta/\ln\Delta}\,}$.
\end{corollary}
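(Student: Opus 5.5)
The corollary is the instance $\ell=2$, $\zeta=8$ of Theorem~\ref{thm:multi}, so the plan is simply to verify the hypotheses of that theorem and check the numerical inequality $\Gamma(2,8)<6.6$.

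\textbf{Step 1: verify the hypotheses.}
\begin{itemize}
\item A bipartite graph is $2$-colorable, so $\ell=2$ is admissible.
\item The ratio $\zeta=8$ satisfies $\zeta>\ell=2$, as the theorem requires.
\end{itemize}

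\textbf{Step 2: bound the constant.} By~\eqref{eq:Gamma},
\[
\Gamma(2,8)=2\sqrt{1}\,c(4)=2c(4).
\]
Since $c(4)^2$ is defined as an infimum, any admissible test point $v\in(1/4,1)$ gives an upper bound on it. Following Remark~\ref{rem:cfunction}, take $v=0.574$, so that $sv=4\cdot 0.574=2.296$. Then we need to check
\[
\frac{2h(2.296)}{0.574\cdot0.426}<10.74 .
\]
The arithmetic runs as follows:
\begin{itemize}
\item $h(2.296)=2.296\ln(2.296/1.296)\approx 2.296\cdot0.5719\approx1.3131$;
\item $0.574\cdot0.426=0.244524$;
\item hence the quotient is $\approx 2.6262/0.244524\approx10.74$, marginally below $10.74$ to the stated precision.
\end{itemize}
If more slack is wanted, a slightly better $v$ can be chosen. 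This gives $c(4)<\sqrt{10.74}<3.278$, and therefore $\Gamma(2,8)<6.556<6.6$.

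\textbf{Step 3: apply the theorem.} Set $\gamma=6.6>\Gamma(2,8)$. Theorem~\ref{thm:multi} then supplies $\Delta_0=\Delta_0(2,8,6.6)$ such that every bipartite $G$ with $\Delta\ge\Delta_0$ and $|V(G)|\ge8\Delta$ satisfies
\[
\chie(G)\le\ceil*{6.6\sqrt{\Delta/\ln\Delta}\,}.
\]

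The only genuine obstacle is the certified numerical evaluation in Step 2, and it is minor. The strict inequality needed is $\Gamma(2,8)<6.6$, while the bound obtained is about $6.556$, so there is ample slack. Even the cruder test point $v=1/2$ works: it gives $c(4)^2\le 8h(2)=8\cdot2\ln2\approx11.09$, so $c(4)<3.331$ and $\Gamma(2,8)<6.66$, which falls just short of $6.6$. That is why the optimized $v$ near $0.574$ is used.
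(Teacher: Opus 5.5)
Your proposal is correct and follows the paper's own route: the corollary is Theorem~\ref{thm:multi} with $\ell=2$, $\zeta=8$, $\gamma=6.6$, and the test point $v=0.574$ of Remark~\ref{rem:cfunction} certifies $\Gamma(2,8)=2c(4)<6.56<6.6$. One slip in your closing aside: the test point $v=\tfrac12$ does \emph{not} work, since it only gives $\Gamma(2,8)\le 2\sqrt{16\ln2}\approx6.660>6.6$, which is exactly why the optimized $v$ is needed. Separately, your rounding near $10.74$ is harmless, because the conclusion only requires $c(4)^2<3.3^2=10.89$.
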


The proof does not require $\ell$ to be fixed, and tracking the dependence yields a bound in which the chromatic number appears as a parameter rather than a hypothesis.

\begin{theorem}[Chromatic number as a parameter]\label{thm:uniform}
There is an absolute constant $\Delta_1$ such that every graph $G$ with $\Delta:=\Delta(G)\ge\Delta_1$,
\[
\ell\;:=\;\chi(G)\;\le\;\Bigl(\frac{\Delta}{\ln\Delta}\Bigr)^{1/3}\qquad\text{and}\qquad |V(G)|\;\ge\;3\,\ell\,\Delta
\]
satisfies
\[
\chie(G)\;\le\;\ceil*{5\,\ell^{3/2}\sqrt{\frac{\Delta}{\ln\Delta}}\,}\;=\;O\!\left(\chi(G)^{3/2}\sqrt{\frac{\Delta}{\ln\Delta}}\right).
\]
\end{theorem}

The restriction on $\chi(G)$ is close to the point beyond which the bound ceases to say anything: its right-hand side exceeds $\Delta$ once $\chi(G)>(\tfrac15)^{2/3}(\Delta\ln\Delta)^{1/3}$, where the Hajnal--Szemer\'edi bound $\chie\le\Delta+1$ is already better. The two thresholds differ by a factor $\Theta(\ln^{2/3}\Delta)$, so Theorem~\ref{thm:uniform} covers all but a logarithmically narrow portion of the range on which \emph{this} bound is meaningful. This is a statement about the bound, not about $\chie$: we do not claim that no sublinear bound holds for larger $\chi(G)$, and we know of no obstruction to one. What Theorem~\ref{thm:uniform} does establish is that $\chi$ --- not bipartiteness, and not boundedness of $\chi$ --- is the parameter that governs the answer.

The next result shows that both the exponent $\tfrac12$ and the logarithm are correct.

\begin{theorem}[Lower bound]\label{thm:lower}
Let $\ell\ge2$. For every real $\zeta>1$ there exists $\Delta_0=\Delta_0(\ell,\zeta)$ such that for every integer $\Delta\ge\Delta_0$ there is a graph $G$ with $\chi(G)\le\ell$, $\Delta(G)=\Delta$ and $|V(G)|\ge\zeta\Delta$ for which
\[
\chie(G)\;\ge\;\frac{1}{3}\sqrt{\frac{(\ell-1)\Delta}{\ln\Delta}}\,.
\]
\end{theorem}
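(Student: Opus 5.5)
The construction is an unbalanced random bipartite graph (an $\ell$-partite one in general). Its small side is so sparse relative to the required class size that every class meeting it must contain only a few of its vertices, yet no class can absorb many of them, because their common non-neighbourhood collapses. Concretely, for $\ell=2$ we fix parameters $d$ and $m$ and a large side $B$ of size $N$, with $n:=m+N$ and $n\ge\zeta\Delta$. We take $A$ of size $m$ and join each $b\in B$ to $d$ uniformly random vertices of $A$, independently. Vertices of $A$ then have degree concentrated near $Nd/m$. We choose $m:=\lceil nd/\Delta\rceil$ up to a $(1-o(1))$ factor, so that $\Delta(G)\le\Delta$ with high probability. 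Then we attach a disjoint star, or pad a vertex, to make $\Delta(G)=\Delta$ exactly; this changes $n$ by at most $\Delta+1$, which is absorbed into $\zeta$. We take $d:=\alpha\sqrt{\Delta\ln\Delta}$ with a constant $\alpha$ to be tuned. For general $\ell$ we replace $A$ by $\ell-1$ parts $A_1,\dots,A_{\ell-1}$ of size $m$ each, with every $b\in B$ joined to $d$ random vertices of each $A_i$. This is $\ell$-colourable, and $\ell=2$ gives a bipartite graph.

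The key deterministic step is a dichotomy. Suppose $G$ has an equitable $k$-colouring with $k\le\frac13\sqrt{(\ell-1)\Delta/\ln\Delta}$. Each class has size at least $\lfloor n/k\rfloor$. Let a class meet $A$ in a set $S$ with $|S|=a$. Its part in $B$ lies in the common non-neighbourhood of $S$, whose size is about $N(1-d/m)^{a}\le Ne^{-ad/m}$. We arrange $(\ell-1)m<n/(2k)$, which is possible because $k<\Delta/(2(\ell-1)d)$ after tuning $\alpha$. Then $a<n/(2k)$, so the class needs at least $n/(2k)$ vertices of $B$. This forces $a\le (m/d)\ln(2kN/n)\le (m/d)\ln(2k)$ in the bipartite case, and per part in general. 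Since the $k$ classes must cover all of $A$, we get $m\le k(m/d)\ln(2k)$, that is, $k\ge d/\ln(2k)$.

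Both requirements now pin down $k$. The condition $(\ell-1)m<n/(2k)$ asks for $k\lesssim\Delta/((\ell-1)d)$, while covering asks for $k\gtrsim d/\ln k\approx 2d/\ln\Delta$. Optimising $\alpha$ so that neither is satisfiable below $\frac13\sqrt{(\ell-1)\Delta/\ln\Delta}$ yields the contradiction. The factor $\sqrt{\ell-1}$ enters because the $\ell-1$ small parts share the budget $n/(2k)$ while each is covered separately. I would check that the constant $\frac13$ leaves slack for the $o(1)$ losses.

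The main obstacle is the probabilistic step. We need, simultaneously for all $S\subseteq A$ (per part) with $|S|\approx (m/d)\ln(2k)$, that the common non-neighbourhood in $B$ is at most a constant times its mean $Ne^{-ad/m}$. That mean is of order $n/k$. I would use a Chernoff bound, since the events ``$b$ misses $S$'' are independent over $b\in B$, together with a union bound over the $\binom{m}{a}\le e^{a\ln m}$ choices of $S$. We have $m=\Theta(\zeta d)$ and $a=O((m/d)\ln\Delta)=O(\zeta\ln\Delta)$, so the union-bound cost is only $O(\zeta\ln^2\Delta)$ in the exponent. The Chernoff gain is $\Omega(n/k)=\Omega(\sqrt{\Delta\ln\Delta})$, so this works for $\Delta\ge\Delta_0(\ell,\zeta)$. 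It suffices to handle a single threshold value of $a$, by monotonicity of non-neighbourhoods under adding vertices to $S$. Degree concentration for $A$-vertices is a routine Chernoff bound plus a union bound over $A$; if necessary we delete the few edges at overfull vertices, which only enlarges non-neighbourhoods by a negligible amount, so it needs a small correction in the count rather than harming the dichotomy.
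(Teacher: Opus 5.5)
Your argument for $\ell=2$ works, subject to the padding remark below. It is organised differently from the paper's. Each $b\in B$ chooses its own $d$ neighbours in $A$, so the events $\{b\notin N(S)\}$ are independent over $b$, and a plain Chernoff bound replaces the paper's negative-association argument. The price is random $A$-degrees, which must be concentrated and then padded.

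The gap is in the passage to general $\ell$. You put no edges among $A_1,\dots,A_{\ell-1}$, so their union is independent and your graph is in fact bipartite. Your explanation of the factor $\sqrt{\ell-1}$ (the parts share the budget $n/(2k)$ but are each covered separately) cannot hold: parts that can share a class are not covered separately. Concretely, one class may meet every $A_i$, so per-part covering yields only $k\ge d/\ln(2k)$. Meanwhile your budget condition $(\ell-1)m<n/(2k)$ with $m\approx nd/\Delta$ forces $k<\Delta/(2(\ell-1)d)$. With $d=\alpha\sqrt{\Delta\ln\Delta}$ these read $K<\frac{1}{2(\ell-1)\alpha}\sqrt{\Delta/\ln\Delta}$ and $K<2\alpha\sqrt{\Delta/\ln\Delta}$. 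The best choice $\alpha=1/(2\sqrt{\ell-1})$ gives only $K<\sqrt{\Delta/((\ell-1)\ln\Delta)}$. So $\sqrt{\ell-1}$ enters with the wrong sign, and the target $\frac13\sqrt{(\ell-1)\Delta/\ln\Delta}$ is out of reach for every $\ell\ge4$.

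Bounding the total $|C\cap(A_1\cup\dots\cup A_{\ell-1})|$ via the product of the miss probabilities does not help. It only recovers an $\ell$-free bound of order $\sqrt{\Delta/\ln\Delta}$. No analysis can rescue this construction. For $\zeta=8$ it is a bipartite graph of order at least $8\Delta$, so Corollary~\ref{cor:bip} gives $\chie(G)\le\lceil 6.6\sqrt{\Delta/\ln\Delta}\,\rceil$, which is below the target once $\ell\ge400$ and $\Delta$ is large.

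The missing idea is the one the paper's construction rests on: join the small parts pairwise completely, which is exactly what the extra colours pay for. Then every class meets at most one $A_i$. The budget condition becomes $m<n/(2k)$, that is $k<\Delta/(2d)$, and the covering counts genuinely add over the parts, giving $k\ge(\ell-1)d/\ln(2k)$. The compromise in $\alpha$ now allows $K$ up to about $\sqrt{(\ell-1)\Delta/\ln\Delta}$, leaving room for the constant $\frac13$. The joins add $(\ell-2)m=o(\Delta)$ to each degree in $A_i$, so the $B$-degrees there must be capped at $\Delta-(\ell-2)m$, just as the paper uses $\Delta'=\Delta-(\ell-2)a$.

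Separately, discard the disjoint-star option for making $\Delta(G)=\Delta$ exact. The $\Delta$ leaves have no neighbours in $A$, so a single class can consist of all of $A$ together with leaves. The padded graph is then equitably colourable with $O(\zeta)$ colours. Pad instead by adding edges between $A$ and $B$, which only shrinks common non-neighbourhoods.
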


For $\ell=2$ this is a bipartite construction and matches Theorem~\ref{thm:multi} up to an absolute constant. For larger $\ell$ it shows that the growth in $\ell$ is at least $\sqrt{\ell-1}$, against the $\ell^{3/2}$ of Theorem~\ref{thm:multi}; see Section~\ref{sec:concl}.

Together, Theorems~\ref{thm:multi} and~\ref{thm:lower} determine the extremal function
\[
f_{\ell,\zeta}(\Delta)\;:=\;\max\bigl\{\chie(G)\ :\ \chi(G)\le\ell,\ \Delta(G)=\Delta,\ |V(G)|\ge\zeta\Delta\bigr\}
\]
up to a multiplicative factor $O(\ell)$, for every fixed $\ell$ and every $\zeta>\ell$:
\[
\frac13\sqrt{\frac{(\ell-1)\Delta}{\ln\Delta}}\;\le\;f_{\ell,\zeta}(\Delta)\;\le\;\bigl(\Gamma(\ell,\zeta)+o(1)\bigr)\sqrt{\frac{\Delta}{\ln\Delta}} ,
\qquad
\Gamma(\ell,\zeta)\;\downarrow\;2\sqrt2\,\ell\sqrt{\ell-1}\ \text{ as }\zeta/\ell\to\infty .
\]
In the limit $\zeta/\ell\to\infty$ the upper and lower constants differ by a factor $6\sqrt2\,\ell$, which is below $17$ when $\ell=2$. This answers Question~\ref{q:main}. The order $\sqrt{\Delta/\ln\Delta}$ is a genuine intermediate: $\chie$ is neither linear in $\Delta$ nor bounded. The lower bound is independent of $\zeta$, and the upper bound depends on it only through a constant that stabilizes once $\zeta/\ell$ is large.

The anchor sets of Theorem~\ref{thm:multi} are \emph{found}, by Lemma~\ref{lem:avg}, and Lemma~\ref{lem:derand} locates them deterministically but not in linear time. Prescribing them instead costs a factor $\sqrt{\ln\Delta}$ and yields an optimal algorithm; we record this trade-off because it is the only route to such an algorithm known to us.

\begin{theorem}[Prescribed anchors]\label{thm:upper}
Let $\ell\ge2$. For every real $\zeta>2\ell$ and every real $\gamma>2\ell\sqrt{\dfrac{\zeta(\ell-1)}{\zeta-2\ell}}$ there exists $\Delta_0=\Delta_0(\ell,\zeta,\gamma)$ such that every $\ell$-colorable graph $G$ with $\Delta:=\Delta(G)\ge\Delta_0$ and $|V(G)|\ge\zeta\Delta$ satisfies
\[
\chies(G)\;\le\;\ceil*{\gamma\sqrt{\Delta}\,},
\]
that is, $G$ is equitably $k$-colorable for every $k\ge\ceil{\gamma\sqrt\Delta}$. For bipartite graphs this reads: $\zeta>4$ and $\gamma>4\sqrt{\zeta/(\zeta-4)}$.
\end{theorem}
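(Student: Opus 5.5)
The plan is to build the equitable $k$-coloring directly from a fixed proper $\ell$-coloring $V_1,\dots,V_\ell$ of $G$, ordered so that $|V_1|\ge|V_2|\ge\dots\ge|V_\ell|$, so that $|V_1|\ge n/\ell$. For $k\ge\Delta+1$ the claim is the Hajnal--Szemer\'edi theorem, so we fix an integer $k$ with $\ceil{\gamma\sqrt\Delta}\le k\le\Delta$. Write $n=qk+r$ with $0\le r<k$; the target is $r$ classes of size $q+1$ and $k-r$ classes of size $q$. Here $q\approx n/k\ge\zeta\sqrt\Delta/\gamma$, so classes are large compared with $\sqrt\Delta$.

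Most classes will be \emph{pure}, lying inside a single $V_i$. The only obstruction is arithmetic: $|V_i|$ need not be a combination of $q$'s and $(q+1)$'s that fits the global count. We remove this obstruction with a bounded number of \emph{mixed} classes. Each mixed class consists of a prescribed \emph{anchor} $A\subseteq V_i$ with $i\ge2$, of size at most $a$, together with a filler taken from $V_1\setminus N(A)$. Since $|N(A)\cap V_1|\le a\Delta$ and $V_1$ is independent, the mixed class is independent as soon as the filler avoids $N(A)$.

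The anchors are prescribed arbitrarily rather than found. This is what makes the construction linear-time, and it is also why we lose the factor $\sqrt{\ln\Delta}$ relative to Theorem~\ref{thm:multi}. Concretely, for each $i\ge2$ we first cut $V_i$ into pure classes of sizes $q$ and $q+1$. The remainder $\rho_i\le q$ is then split into at most $t$ anchors of size at most $a:=\ceil{q/t}$, so there are at most $m:=(\ell-1)t$ mixed classes. Finally $V_1$ supplies the fillers for the mixed classes, and its leftover is cut into pure classes. The counts of $(q+1)$-classes versus $q$-classes are adjusted at the end by moving single vertices of $V_1$ between $V_1$-classes. There is freedom to choose which classes are size $q+1$, since $V_1$ alone contains many classes.

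The fillers are assigned greedily, one mixed class at a time. At each step the number of unused vertices of $V_1$ outside $N(A)$ must be at least $q+1$. Up to lower-order terms, this holds provided
\[
|V_1|-m(q+1)-a\Delta\;\ge\;q+1 .
\]
Since $V_1$ is used up entirely, we also need its final leftover to be nonnegative, which is the same inequality. We use $|V_1|\ge n/\ell$, $q\le n/k+1$, $m=(\ell-1)t$ and $a\le n/(kt)+1$, and optimize $t\approx\sqrt{\Delta/(\ell-1)}$. The condition then becomes a quadratic inequality in $k$ of the form
\[
\frac{n}{\ell}\;\gtrsim\;\frac{n}{k}\cdot 2\sqrt{(\ell-1)\Delta}\;+\;(\text{terms of order }\ell\Delta\text{ from rounding and the }q+1\text{ slack}).
\]
Solving this with $n\ge\zeta\Delta$ should reproduce exactly the threshold $k^2>4\ell^2(\ell-1)\zeta\Delta/(\zeta-2\ell)$. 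The $-2\ell$ in the denominator arises from the per-class neighbourhood cost being charged against $V_1$ at worst case. I expect to have to refine the accounting, rather than simply divide $n$ by $\ell$, to land on precisely this constant. Once the constants are matched, any $\gamma$ above the stated value leaves an additive slack of order $\sqrt\Delta$, which absorbs all ceilings for $\Delta\ge\Delta_0$.

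The step I expect to be the main obstacle is this bookkeeping: making the inequality hold uniformly for \emph{every} $k$ in $[\ceil{\gamma\sqrt\Delta},\Delta]$, as the threshold $\chies$ requires. At the top of the range, $q$ becomes small, of order $\zeta$, and anchors of size $\ceil{q/t}$ degenerate to single vertices. There I would take $t=\rho_i$, so that each anchor is one vertex and costs only $\Delta$, and then check that the inequality still holds because $m\le(\ell-1)q$ is then tiny. The other delicate point is that the sizes $q$ versus $q+1$ must be assigned consistently. I would handle this by fixing all mixed and non-$V_1$ classes at size $q$, except where forced, and letting $V_1$'s pure classes absorb the residue $r$. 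This is possible because $V_1$ contains at least $k/\ell-m\ge$ roughly $(\zeta-2\ell)k/(\zeta\ell)$ classes, more than enough once $\zeta>2\ell$.
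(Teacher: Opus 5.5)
Your construction is the paper's, namely Lemma~\ref{lem:anchor} and Proposition~\ref{prop:core}. Pure classes sit in the small parts. Residues of at most $q$ are cut into prescribed anchors. Fillers come greedily from the largest part outside $N(\text{anchor})$. The reservation is frozen independently of $k$, and Hajnal--Szemer\'edi covers $k>\Delta$. Your inequality $|V_1|-m(q+1)-a\Delta\ge q+1$ is essentially condition (P2).

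The genuine gap is the size profile. You make every class outside $V_1$ of size $q$ and let the pure classes of $V_1$ carry all $r$ classes of size $q+1$. But $r$ ranges up to $k-1$, while $V_1$ hosts at most about $|V_1|/q$ classes, roughly $k/\ell$ when the parts are balanced. So the plan fails whenever $n/k$ is just below an integer. Moving vertices between $V_1$-classes cannot repair this: once the other classes are fixed, the number of $(q+1)$-classes inside $V_1$ is forced.

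Your ``except where forced'' case is exactly the missing lemma (Lemma~\ref{lem:multinormal}): carve each $V_i$, $i\ge2$, greedily, taking a $(q+1)$-class while that budget lasts and a $q$-class otherwise. A stop with at least $q+1$ vertices left would require both budgets to be exhausted, i.e.\ all $n$ vertices placed outside $V_1$, so every residue is at most $q$. At least $(|V_1|-r)/q$ classes then remain for the mixed classes and for $V_1$. Hence you need $|V_1|\ge r+mq$ for \emph{every} $k\in[k_1,\Delta]$, where $k_1:=\lceil\gamma\sqrt\Delta\rceil$. This is the paper's (A1). It is not monotone in $k$, because $r$ is not, and it is where the room hypothesis enters at the top of the range. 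In your parametrization it does hold: $mq\le(\ell-1)t\,n/k_1\approx n\sqrt{\ell-1}/\gamma<n/(2\ell)$ and $r<k\le\Delta\le n/\zeta<n/(2\ell)$, exactly as in Step~2 of the paper.

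Neither of the two difficulties you anticipate is real. First, your $t\approx\sqrt{\Delta/(\ell-1)}$ does not depend on $k$, so every term of $(\ell-1)t(q+1)+\lceil q/t\rceil\Delta+q+1$ is nonincreasing in $k$. The reservoir inequality at $k_1$ therefore gives it on the whole range, as in Lemma~\ref{lem:mono}. The switch to $t=\rho_i$ near $k=\Delta$ is automatic, because $\lceil\rho_i/\lceil q/t\rceil\rceil\le\min\{t,\rho_i\}$.

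Second, you should not try to reproduce $\zeta/(\zeta-2\ell)$. The only rounding loss that is not lower order is the ceiling in $\lceil q/t\rceil$, which costs one anchor vertex, i.e.\ $\Delta$. At $k_1$ your inequality thus becomes $n/\ell-\Delta\ge(2+o(1))(n/k_1)\sqrt{(\ell-1)\Delta}$. For $n\ge\zeta\Delta$ this follows from $\gamma>2\ell\sqrt{\ell-1}\,\zeta/(\zeta-\ell)$, which the hypothesis implies because $\zeta^2/(\zeta-\ell)^2\le\zeta/(\zeta-2\ell)$. The paper's constant comes from freezing the total reservation at $\lfloor k_1/(2\ell)\rfloor$. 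That leaves a reservoir of about $n/(2\ell)$, from which the floor in $L=\lfloor\beta/(\Delta-1)\rfloor$ removes $\Delta$. Your per-part choice reserves only the fraction $\sqrt{\ell-1}/\gamma<1/(2\ell)$ of the classes, and so leaves more room.
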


Theorem~\ref{thm:upper} also requires more room than Theorem~\ref{thm:multi}: $\zeta>2\ell$ against $\zeta>\ell$. Unlike Theorem~\ref{thm:multi}, its anchor sets are fixed in advance, computed from $\ell$, $\zeta$, $\gamma$ and the part sizes alone; the linear-time algorithm requires this, and it is also the reason the bound holds for every $k\ge\ceil{\gamma\sqrt\Delta}$ rather than for one value of $k$.

Theorem~\ref{thm:upper} needs $n>2\ell\Delta$. For bipartite graphs that hypothesis can be reduced to the best possible one.

\begin{theorem}[Little room]\label{thm:smallzeta}
For every real $\zeta>1$ there are $C(\zeta)$ and $\Delta_0(\zeta)$ such that every bipartite graph $G$ with $\Delta:=\Delta(G)\ge\Delta_0$ and $|V(G)|\ge\zeta\Delta$ satisfies $\chie(G)\le C(\zeta)\sqrt\Delta$. One may take $C(\zeta)=\max\{4\sqrt{\zeta/(\zeta-1)},2\sqrt{10}\}+o(1)$, the $o(1)$ as $\Delta\to\infty$.
\end{theorem}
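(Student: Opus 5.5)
We treat Theorem~\ref{thm:smallzeta} by splitting on the shape of the bipartition, writing $V(G)=A\cup B$ with $|A|\ge|B|$ and $n:=|A|+|B|\ge\zeta\Delta$.

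\emph{The two regimes.}
\begin{itemize}
\item If $n>4\Delta$, Theorem~\ref{thm:upper} with $\ell=2$ applies directly once $\zeta'=n/\Delta>4$. The only worry is that the required $\gamma>4\sqrt{\zeta'/(\zeta'-4)}$ blows up as $\zeta'\downarrow4$. We therefore fix a cutoff $\zeta^*$ (say $\zeta^*=5$) and use Theorem~\ref{thm:upper} only when $n\ge\zeta^*\Delta$. This yields a constant of order $4\sqrt{5}=2\sqrt{20}$, or whatever the cutoff optimization gives; it should be compared with the $2\sqrt{10}$ term.
\item The real work is the window $\zeta\Delta\le n<\zeta^*\Delta$. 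Here $n=\Theta(\Delta)$, so we only need an equitable coloring with $k\approx C\sqrt\Delta$ classes, each of size about $n/k=\Theta(\sqrt\Delta)$.
\end{itemize}

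\emph{Construction in the window.} We imitate the star obstruction in reverse and use one-sided classes as the bulk.
\begin{enumerate}
\item Choose $k=\lceil C\sqrt\Delta\rceil$ and target class size $m\in\{\lfloor n/k\rfloor,\lceil n/k\rceil\}$.
\item Take $a$ classes inside $A$ and $b$ classes inside $B$, with $a+b=k$. One-sided classes are automatically independent, so any partition of $A$ and $B$ into pieces of size $m$ works, except for divisibility leftovers: the residues $|A|-am$ and $|B|-bm$, which are at most $m$.
\item Absorb the residues with a few mixed classes. Each mixed class consists of a set $S\subseteq A$ together with a set $T\subseteq B$ having no edges to $S$. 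Given $S$ with $|S|\le m=O(\sqrt\Delta)$, the set $N(S)$ has size at most $|S|\Delta$.
\end{enumerate}
This bound on $N(S)$ is the crux. We need $|B\setminus N(S)|\ge|T|$, which can fail when $|B|$ is only about $(\zeta-1)\Delta/2$.

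\emph{Choosing the anchor.} To fix this, take $S$ to be a single vertex, or a few vertices whose neighborhoods overlap, so that $|B\setminus N(S)|\ge|B|-\Delta$. For this to be positive we need $|B|>\Delta$. If instead $|B|\le\Delta$, then $|A|\ge(\zeta-1)\Delta$. In that case we handle the residue on the $A$ side: a vertex $x\in B$ misses at least $|A|-\Delta\ge(\zeta-2)\Delta$ vertices of $A$, which is large once $\zeta>2$.

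For general $\zeta>1$, neither side has room for a single anchor whose non-neighborhood is linear in $\Delta$. The fallback is to make the residue class lie entirely within one side by choosing the split of $k$ cleverly. We need integers $a,b$ and sizes in $\{m,m+1\}$ such that the $a$ pieces exactly tile $A$ and the $b$ pieces exactly tile $B$. This is possible whenever $|A|\ge a m$ and $|A|\le a(m+1)$ for some integer $a$, that is, whenever the interval $[|A|/(m+1),|A|/m]$ contains an integer. The interval has length about $|A|/m^2\approx kn^{-1}\cdot k|A|/n\gtrsim C^2(\zeta-1)/(2\zeta)$. Choosing $C$ with $C^2\ge 2\zeta/(\zeta-1)\cdot$const makes this length at least $1$, which gives the $4\sqrt{\zeta/(\zeta-1)}$ term.

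The main obstacle is the simultaneous requirement that the complementary interval for $B$ also contain the integer $k-a$. We would handle it by letting $m$ float and choosing the pair $(a,b)$ from a two-dimensional lattice argument. When $|B|$ is too small for its interval to have length at least $1$, we would fall back to one mixed class using the $B$-vertex anchor above. The $2\sqrt{10}$ term presumably comes from that subcase.
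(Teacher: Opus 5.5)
\textbf{Verdict: there is a genuine gap.} Your overall architecture is the paper's: Theorem~\ref{thm:upper} when $n/\Delta$ is large, and in the remaining window a dichotomy between colorings with every class inside one side and colorings with single-anchor mixed classes. But the step you flag as ``the main obstacle'' is the whole content of the one-sided case, and with $k$ fixed in advance it cannot be done.

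Once $k$ is fixed, so is $r$. With $a$ the smaller side and $q=\lfloor n/k\rfloor$, you need an integer $x$ such that $u:=a-xq$ satisfies $0\le u\le\min\{x,r\}$ and $r-u\le k-x$. Your interval condition only gives $u\le x$.

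This cannot be repaired at a prescribed $k$. Take $1<\zeta<2$ and $G=K_{a,\Delta}$ with $(\zeta-1)\Delta\le a<\Delta$. Then $\Delta(G)=\Delta$, $n\ge\zeta\Delta$, and no independent set meets both sides, so mixed classes do not exist. Hence $G$ has no equitable $k$-coloring whenever $k\mid n$ and $n/k\nmid a$, and $a$ can be chosen to arrange this at $k=\lceil C\sqrt\Delta\rceil$. No lattice argument and no mixed class helps here; this is also why Theorem~\ref{thm:smallzeta} is a statement about $\chie$, not $\chies$.

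The missing idea is Lemma~\ref{lem:arith}: fix the class size and let $k$ follow. Take $q:=\lfloor\sqrt{\min\{a,n/5\}}\rfloor$ and $k:=\lfloor n/q\rfloor-1$. Then $\lfloor n/k\rfloor=q$ and $q\le r\le 2q-1$. The smaller side takes $x=\lfloor a/q\rfloor\ge q>u=a\bmod q$ classes, $u$ of them of size $q+1$. The larger side takes the other $k-x\ge b/q-2\ge 2q-1\ge r-u$ classes, using $b\ge n/2$ and $4q^2+2q\le n$. This gives $k\le(1+o(1))\max\{n/\sqrt a,\sqrt{5n}\}$. The term $2\sqrt{10}$ comes from $\sqrt{5n}<\sqrt{40\Delta}$ for $n<8\Delta$, not from a mixed-class subcase.

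Your fallback also needs repair, in two respects.
\begin{enumerate}
\item Splitting at $|B|\le\Delta$ and estimating $|A|-\Delta\ge(\zeta-2)\Delta$ leaves $1<\zeta\le 2$ uncovered. The right split is on the smaller side $a$ at $\alpha^*n$, with $\alpha^*:=(\zeta-1)/(2\zeta)$. If $a<\alpha^*n$, the larger side satisfies $b-\Delta>n-\alpha^*n-n/\zeta=\alpha^*n$ for every $\zeta>1$.
\item One mixed class is not enough. The slack $b-\Delta$ may be smaller than $\Delta$, so each mixed class can carry only one anchor. The residue can have up to $q\approx\sqrt\Delta$ vertices, so up to $q$ mixed classes are needed.
\end{enumerate}
With the split above, Proposition~\ref{prop:core} with $t:=q$ reserved single-anchor classes works at the fixed $k=\lceil\gamma\sqrt\Delta\rceil$ once $q(q+1)\le b-\Delta+1$, i.e.\ $k\gtrsim\sqrt{n/\alpha^*}$. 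There the exact count $r$ is handled by Lemma~\ref{lem:multinormal}, since mixed classes may take either size. The same $\alpha^*$ gives $n/\sqrt a\le\sqrt{n/\alpha^*}$ in the one-sided case. So both cases cost $4\sqrt{\zeta/(\zeta-1)}\sqrt\Delta$ when $n<8\Delta$.

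Finally, invoking Theorem~\ref{thm:upper} only from $n\ge 5\Delta$ costs $4\sqrt5>2\sqrt{10}$. That proves some $C(\zeta)$, but the stated value needs the cutoff $8$, where $4\sqrt2<2\sqrt{10}$.
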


The hypothesis $\zeta>1$ cannot be weakened, since the star has $n=\Delta+1$ and $\chie=\ceil{\Delta/2}+1$; so for $\chie$ the order threshold is exactly $n>\Delta$. For $\chies$ it is not: Proposition~\ref{prop:threshsharp} exhibits bipartite graphs with $n=2\Delta$ and $\chies=\Delta+1$, so the hypothesis $\zeta>2\ell$ of Theorem~\ref{thm:upper} is within a factor $2$ of best possible when $\ell=2$.

\begin{remark}\label{rem:supersedes}
Theorem~\ref{thm:upper} is proved from first principles and does not depend on any previous bound for this class. It also subsumes the linear bounds known for this class. A bound $\chie(G)\le\ceil{\Delta/2}+1$ for bipartite graphs with $n\ge\zeta\Delta$ and $\Delta$ large was obtained for $\zeta>\tfrac{41}{2}$ by Nikabadi~\cite{Nikabadi26}; Theorem~\ref{thm:upper} applies on the wider range $\zeta>4$; on the common range $\zeta>\tfrac{41}{2}$, for $\gamma$ close to its critical value $4\sqrt{\zeta/(\zeta-4)}$, it gives $\ceil{\gamma\sqrt\Delta}\le\ceil{\Delta/2}+1$ for every $\Delta\ge75$; on the range where both apply it is therefore smaller by a factor $\Theta(\sqrt\Delta)$. (The comparison is of the two bounds; each still requires its own lower bound on $\Delta$.)
\end{remark}

A bound on $\chie$ does not by itself bound $\chies$; Theorem~\ref{thm:upper} is stated for the threshold because its construction, unlike that of Theorem~\ref{thm:multi}, is monotone in $k$ once the number of reserved classes is fixed. Its proof is constructive: it exhibits the coloring rather than merely certifying its existence, and the construction is a single greedy pass, so it runs in optimal time.

\begin{theorem}[Linear time]\label{thm:alg}
There is an algorithm which, given a bipartite graph $G$ (as adjacency lists) satisfying the hypotheses of Theorem~\textup{\ref{thm:upper}} with parameters $\zeta$ and $\gamma$, and any $k$ with $\ceil{\gamma\sqrt\Delta}\le k\le\Delta$, outputs an equitable $k$-coloring of $G$ in time $O(n+m)$, where $m:=|E(G)|$. Given a proper $\ell$-coloring as part of the input, the same holds for $\ell$-colorable graphs.
\end{theorem}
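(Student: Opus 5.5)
We turn the proof of Theorem~\ref{thm:upper} into an algorithm and verify that each step takes time $O(n+m)$. The plan is as follows. Given the bipartite graph (or a graph with a proper $\ell$-coloring supplied), we first compute a proper $\ell$-coloring, with parts $V_1,\dots,V_\ell$, by breadth-first search in time $O(n+m)$; in the bipartite case these are the two sides. We compute $\Delta$ and the part sizes in one pass over the adjacency lists. As stated after Theorem~\ref{thm:upper}, the prescribed anchor structure depends only on $\ell,\zeta,\gamma$, the part sizes and $k$. It consists of the number of reserved classes, the target sizes $\floor{n/k}$ or $\ceil{n/k}$ of the $k$ classes, and the assignment of each class to a part (or to an anchor set drawn from a part). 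All of this is arithmetic on $O(k)$ numbers, so it costs $O(k)\le O(\Delta)\le O(n)$.

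Next we run the single greedy pass from the proof of Theorem~\ref{thm:upper}. Classes confined to one part are filled by streaming the vertices of that part, since no adjacency checks are needed inside an independent part. The reserved (mixed) classes receive the leftover vertices, and a vertex is placed only into a class none of whose current members is a neighbor. The key implementation device is to maintain, for each vertex $v$, a set of ``forbidden'' reserved classes. Concretely, when $u$ is placed into class $j$, we scan $N(u)$ and mark $j$ as forbidden for each neighbor, storing the marks in per-vertex lists. The total marking work is then $\sum_u \deg(u)=2m$. To place $v$, we scan the reserved classes in a fixed cyclic order, skipping those that are full or forbidden for $v$. The existence argument of Theorem~\ref{thm:upper} guarantees that a valid class exists. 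It rests on the counting inequality: $v$ has at most $\Delta$ neighbors, these block at most $\Delta$ classes, and the reserved classes number $\Theta(\sqrt\Delta)$ with room to spare because $\zeta>2\ell$. We need to make sure that proof's choice is realizable greedily in any order; if it instead used a matching or averaging step, we would replace that step by a potential-based greedy rule.

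The main obstacle is bounding the scanning cost for finding an available class. Naively, placing $v$ could require inspecting all $\Theta(\sqrt\Delta)$ reserved classes, giving $O(n\sqrt\Delta)$. Our first approach is to charge each skipped class either to a neighbor of $v$ that forbade it, costing at most $\deg(v)$ in total, or to the class being full. Full classes are removed permanently from a doubly linked list of open classes, so they are skipped $O(1)$ amortized times. To make the ``forbidden'' test $O(1)$, we use a timestamp array indexed by class: before processing $v$, we write $v$'s identifier into the entries of its forbidden classes by walking its mark list. The skip cost is then $O(\deg(v)+1)$, and summing over $v$ gives $O(n+m)$. Finally, we check that equitability is exact: since every target size is prescribed in advance and classes close only when full, the output has the prescribed sizes. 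The range restriction $k\le\Delta$ ensures the bookkeeping arrays have size $O(n)$. For $k>\Delta$ one could instead invoke Hajnal--Szemer\'edi, but that case is not required.
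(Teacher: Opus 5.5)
Your bookkeeping (mark lists, timestamps, deleting full classes, charging each skipped class to a neighbor of $v$) is sound as accounting. The problem is that the algorithm it accounts for is never shown to succeed, and the reason you give is wrong.

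You argue that $v$'s at most $\Delta$ neighbors block at most $\Delta$ classes while there are $\Theta(\sqrt\Delta)$ reserved classes ``with room to spare''. That pigeonhole works only when the number of classes exceeds $\Delta$, which is the Hajnal--Szemer\'edi regime. Here only $t=\floor{k_1/(2\ell)}=\Theta(\sqrt\Delta)\ll\Delta$ classes are reserved, so one vertex can have a neighbor in every one of them. Nothing in your counting stops a vertex-by-vertex first-fit from getting stuck. (The hypothesis $\zeta>2\ell$ is not about the number of classes; it makes the reservoir $\beta=b-t(q+1)$, which is at least about $n/(2\ell)$, exceed $\Delta$.)

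The guarantee behind Theorem~\ref{thm:upper} is class-wise: it is condition (P2) of Lemma~\ref{lem:anchor}. Each mixed class $D_i$ receives an anchor set $R_i$ of \emph{prescribed} size $\rho_i\le H\le L$ from the residue, fixed \emph{before} the class receives any vertex of $B$. Then $|N(R_i)|\le\rho_i\Delta$, and $t(q+1)+(\Delta-1)L\le b$ leaves enough unused non-neighbors of $R_i$ in $B$. Your rule places ``leftover vertices'' one at a time, in unspecified order, into any class containing no neighbor. It respects neither the caps $\rho_i$ nor the anchors-first order. If $B$-vertices enter the mixed classes before the residue does, a residue vertex may be adjacent to some member of every open mixed class and have nowhere to go.

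The paper implements Lemma~\ref{lem:anchor} phase by phase. Phases~1 and~3 are block partitions costing $O(n)$. For each mixed class in turn, it stamps $N(R_i)$ with $i$ in an array indexed by $V(G)$. It then walks a doubly linked list of unused $B$-vertices from its head, skipping stamped vertices and moving unstamped ones into $D_i$ until $s_i-\rho_i$ are collected; Lemma~\ref{lem:anchor} guarantees this succeeds. The cost is linear for three reasons:
\begin{itemize}
\item stamping costs $O(m)$ in total because the anchor sets are disjoint;
\item each vertex is removed from the list once;
\item each skip of a vertex $z$ is charged to an edge from $R_i$ to $z$, with distinct skips charged to distinct edges.
\end{itemize}

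Your first-fit can also be repaired, but only with an argument you did not give. Place the entire residue first as anchors with the prescribed counts $\rho_i$. Then stream $B$, sending each vertex to an open mixed class whose anchors it avoids, and otherwise to a pure $B$ class. Suppose this got stuck. Then every pure class is full and some mixed class $D_i$ is open, and $D_i$ was open throughout. So the $\sum_{i'\in\IB}s_{i'}$ vertices sent to pure classes, and the stuck vertex, all lie in $N(R_i)$, giving $\sum_{i'\in\IB}s_{i'}+1\le\rho_i\Delta$. But the Phase~3 identity and (P2) give $\sum_{i'\in\IB}s_{i'}=|B|-\sum_{i'\in\IM}(s_{i'}-\rho_{i'})\ge(\Delta-1)\max_{i'\in\IM}\rho_{i'}+\sum_{i'\in\IM}\rho_{i'}\ge\rho_i\Delta$, a contradiction. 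With that order, your charging bound goes through.

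Two smaller points. Breadth-first search supplies the parts only when $\ell=2$. And $t$ must be frozen at $\floor{k_1/(2\ell)}$, with $k_1=\ceil{\gamma\sqrt\Delta}$, for every $k$ in the range. The restriction $k\le\Delta$ is the range covered by Step~2 via Lemma~\ref{lem:mono}, not a matter of array sizes.
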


This matches the $\Omega(n+m)$ cost of reading the input. We know of no previous linear-time algorithm producing an equitable coloring of a bipartite graph with a sublinear number of colors.

\subsection*{Method}
The order of magnitude is captured by the following heuristic, stated for a bipartite graph with parts $A$ and $B$. With $k$ colors, a constant fraction of $B$ may be reserved for classes that mix the two sides. Each such class is anchored at a few vertices of $A$, and anchoring one vertex forbids its $\Delta$ neighbors, so each mixed class can carry about $n/\Delta$ anchors, and the $k$ classes about $kn/\Delta$ in total. About $n/k$ vertices of $A$ are left over by the $A$-only classes, since the class sizes $\floor{n/k}$ and $\ceil{n/k}$ need not divide $|A|$. Equating the total capacity $kn/\Delta$ with this demand $n/k$ yields $k=\Theta(\sqrt\Delta)$: above that threshold there are enough classes, below it there are not. Both bounds of this paper make the heuristic exact, in opposite directions --- and the factor $\sqrt{\ln\Delta}$ separating them is exactly the extent to which anchors can be chosen to share neighbors.

The upper bounds rest on a single combinatorial device, an \emph{anchored partition lemma} (Lemma~\ref{lem:anchor}). Given a partition of $V(G)$ into $\ell$ independent sets with largest part $B$, and an arbitrary prescribed profile of class sizes $s_1,\dots,s_k$, the lemma provides a pair of counting conditions under which $V(G)$ splits into independent sets of exactly these sizes, with a prescribed set of classes contained in a prescribed part, a prescribed set contained in $B$, and each remaining \emph{mixed} class anchored at a prescribed number of vertices of a prescribed part. The proof is a greedy argument with no backtracking. The improvement from $\Delta/2$ to $\sqrt\Delta$ comes from optimizing the number of colors against the two conditions: reserving $\theta k$ mixed classes leaves a reservoir of about $(\tfrac1\ell-\theta)n$ vertices of $B$; each mixed class can then absorb about $(\tfrac1\ell-\theta)n/\Delta$ leftover vertices, and about $(\ell-1)n/k$ require absorption. Balancing the two and optimizing $\theta$ gives $k=\Theta(\ell^{3/2}\sqrt\Delta)$ whenever $n=\Omega(\ell\Delta)$. Choosing the anchors adaptively rather than prescribing them replaces the estimate $|N(R)|\le|R|\Delta$ by one that is smaller by a factor $\Theta(\ln\Delta)$, which is where the saving of $\sqrt{\ln\Delta}$ in Theorem~\ref{thm:multi} comes from.

The lower bound takes a part $B$ of size about $\zeta\Delta$ and joins each vertex of a part $A$ to a uniformly random $\Delta$-subset of $B$. A concentration argument for negatively associated random variables shows that, with high probability, every set of $s_0=\Theta(\zeta\ln\Delta)$ vertices of $A$ dominates all but $O(\sqrt\Delta)$ vertices of $B$. Consequently no color class contains $s_0$ vertices of $A$ --- otherwise its $B$-side would be too small for the class to reach the equitable size $\floor{n/k}$ --- so at least $|A|/s_0$ classes meet $A$. The bound improves with $|A|$, which is capped at $\floor{n/k}$ so that the independent set $A$ cannot fill a class unaided. Taking $|A|$ at that ceiling gives $k\ge|A|/s_0=\Theta\bigl(n/(k\zeta\ln\Delta)\bigr)$, hence $k^2=\Theta(\Delta/\ln\Delta)$, in which $\zeta$ cancels.

\subsection*{Notation}
Throughout, $n:=|V(G)|$, $m:=|E(G)|$, and $\Delta:=\Delta(G)$. When $G$ is $\ell$-colorable we fix a proper $\ell$-coloring $V_1,\dots,V_\ell$ with $|V_\ell|$ largest, write $B:=V_\ell$ and $b:=|B|$, so that $b\ge n/\ell$, and set $a_j:=|V_j|$ for $j<\ell$ and $A:=V(G)\setminus B$; for a bipartite graph this is a bipartition $\{A,B\}$ with $a:=|A|\le b$. Given the number of colors $k\ge 1$, we write $n=kq+r$ with $q:=\floor{n/k}$ and $0\le r<k$; an equitable $k$-coloring thus has exactly $r$ classes of size $q+1$ and $k-r$ classes of size $q$. For $v\in V(G)$, $N(v)$ is the neighborhood of $v$, and for $S\subseteq V(G)$, $N(S):=\bigcup_{v\in S}N(v)$. Logarithms are natural, and $x_+:=\max\{x,0\}$. We say a set $S$ \emph{dominates} $v$ if $v\in N(S)$.

\subsection*{Organization}
Section~\ref{sec:lemma} contains the two combinatorial tools, the anchored partition lemma and the normalized-form lemma; everything later rests on them. Section~\ref{sec:upper} proves Theorems~\ref{thm:upper} and~\ref{thm:smallzeta}, together with Proposition~\ref{prop:threshsharp}, from a single proposition encapsulating the requirements of the construction in terms of $k$ and the number of reserved classes. Section~\ref{sec:lower} proves Theorem~\ref{thm:lower}. Section~\ref{sec:multi} replaces the prescribed anchors by adaptively chosen ones and proves Theorems~\ref{thm:multi} and~\ref{thm:uniform}; it is independent of Section~\ref{sec:upper} apart from the two tools of Section~\ref{sec:lemma}. Section~\ref{sec:alg} proves Theorem~\ref{thm:alg}, Section~\ref{sec:concl} discusses the limits of the method and what it leaves open, and Appendix~\ref{app:exp} reports numerical experiments.

\section{The anchored partition lemma}\label{sec:lemma}

Although stated for an arbitrary number of parts, Lemma~\ref{lem:anchor} is primarily intended for the bipartite setting, with parts $A,B$ as in the notation above. The colorings we build have three kinds of class (Figure~\ref{fig:anchor}): some are filled entirely from $A$, some entirely from $B$, and the remaining \emph{mixed} classes absorb whatever vertices of $A$ the first kind cannot accommodate, each anchored at a small subset of $A$ and completed greedily within $B$. Lemma~\ref{lem:anchor} isolates exactly what such a construction requires, for an \emph{arbitrary} prescribed profile of class sizes. Prescribing the sizes at the outset keeps the argument short: no class ever has to be resized, so the greedy filling never backtracks; the same feature yields the linear-time implementation of Section~\ref{sec:alg}.

\begin{lemma}[Anchored partition lemma]\label{lem:anchor}
Let $G$ be a graph with $\Delta:=\Delta(G)\ge 1$, let $\ell\ge2$, and let $V(G)=V_1\cup\dots\cup V_\ell$ be a partition into independent sets; write $B:=V_\ell$ and $A:=V(G)\setminus B$. Let $s_1,\dots,s_k\ge 1$ be integers with $\sum_{i=1}^{k}s_i=|V(G)|$. Suppose that $[k]$ can be partitioned into three (possibly empty) sets $\IA,\IM,\IB$, that every $i\in\IA\cup\IM$ carries a label $j(i)\in[\ell-1]$, and that there are integers $\rho_i$ with $0\le\rho_i\le s_i$ for $i\in\IM$, such that
\begin{enumerate}
\item[\textup{(P1)}] $\displaystyle\sum_{i\in\IA:\,j(i)=j}s_i+\sum_{i\in\IM:\,j(i)=j}\rho_i \;=\; |V_j|$ \quad for every $j\in[\ell-1]$, and
\item[\textup{(P2)}] $\displaystyle\sum_{i\in\IM}s_i+(\Delta-1)\max_{i\in\IM}\rho_i \;\le\; |B|$ \quad(\,$\max\emptyset:=0$\,).
\end{enumerate}
Then $V(G)$ can be partitioned into independent sets $D_1,\dots,D_k$ with $|D_i|=s_i$ for all $i$, such that $D_i\subseteq V_{j(i)}$ for $i\in\IA$, $D_i\subseteq B$ for $i\in\IB$, and $D_i\cap A\subseteq V_{j(i)}$ with $|D_i\cap A|=\rho_i$ for $i\in\IM$. Moreover, the partition can be constructed greedily, with arbitrary choices at every step.
\end{lemma}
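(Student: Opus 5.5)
The plan is a three-stage greedy construction, processing the sides $A$ and $B$ separately. In the first stage we distribute $A$. For each $j\in[\ell-1]$, condition (P1) says that $|V_j|$ equals the total demand placed on $V_j$: the sizes $s_i$ of the classes $i\in\IA$ with $j(i)=j$, plus the anchor counts $\rho_i$ of the mixed classes $i\in\IM$ with $j(i)=j$. We therefore split $V_j$ arbitrarily into disjoint pieces of these sizes. For $i\in\IA$ we set $D_i$ to be its piece of size $s_i$. For $i\in\IM$ we let $R_i\subseteq V_{j(i)}$ be its piece of size $\rho_i$. Every vertex of $A$ is used exactly once, and each piece is independent because $V_j$ is. So the classes in $\IA$ are finished, and every mixed class now has the prescribed $A$-part $R_i$.

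In the second stage we complete the mixed classes inside $B$. We handle $i\in\IM$ one at a time, in arbitrary order, adding $s_i-\rho_i$ vertices of $B\setminus N(R_i)$ that no earlier mixed class has taken. Since $B$ is independent and $R_i\subseteq V_{j(i)}$ is independent, the resulting $D_i$ is independent as long as no chosen $B$-vertex is adjacent to $R_i$; there are no edges between $R_i$ and other $A$-vertices inside $D_i$ to worry about. The counting step is the heart of the proof. When class $i$ is processed, the vertices of $B$ already used by earlier mixed classes number at most $\sum_{i'\in\IM,\,i'\ne i}(s_{i'}-\rho_{i'})$. The forbidden set $N(R_i)\cap B$ has size at most $\rho_i\Delta$. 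The number of available vertices is therefore at least
\[
|B|-\sum_{i'\in\IM,\,i'\ne i}(s_{i'}-\rho_{i'})-\rho_i\Delta .
\]
We need this to be at least $s_i-\rho_i$, that is,
\[
|B|\ \ge\ \sum_{i'\in\IM}s_{i'}-\sum_{i'\ne i}\rho_{i'}+(\Delta-1)\rho_i .
\]
This follows from (P2), since $\sum_{i'\ne i}\rho_{i'}\ge0$ and $\rho_i\le\max_{i\in\IM}\rho_i$.

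One subtlety is that the forbidden vertices $N(R_i)$ may overlap with vertices already used. Such overlap only helps, and the crude union bound above suffices. This is why any choice works at every step, with no backtracking, which gives the ``arbitrary choices'' clause of the lemma. The degenerate case $\rho_i=0$ is included: there the class is simply $s_i$ free vertices of $B$.

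In the third stage we distribute what remains of $B$. The number of remaining vertices is
\[
|B|-\sum_{i\in\IM}(s_i-\rho_i).
\]
By (P1) summed over $j$, together with $\sum_{i}s_i=|V(G)|$, this equals $\sum_{i\in\IB}s_i$: we have $|A|=\sum_{\IA}s_i+\sum_{\IM}\rho_i$, so
\[
|B|=|V(G)|-|A|=\sum_{\IB}s_i+\sum_{\IM}(s_i-\rho_i).
\]
We split the remainder arbitrarily into the classes $D_i$, $i\in\IB$, of sizes $s_i$; these are independent because $B$ is. I expect the only real obstacle to be the bookkeeping in the second stage, namely checking that (P2) as stated, with the maximum $\rho$ and the factor $\Delta-1$, dominates the worst-case availability count uniformly over the processing order. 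The inequality displayed above shows that it does.
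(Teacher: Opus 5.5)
Your proof is correct and follows the paper's three-phase greedy argument essentially verbatim. The pure classes and anchor sets are carved out of each $V_j$ using (P1), the mixed classes are completed in $B$ one at a time using $|N(R_i)|\le\rho_i\Delta$ and (P2), and the rest of $B$ is split among $\IB$ by the same counting identity. The only differences are cosmetic: you fix all anchor sets up front, and you bound the used vertices of $B$ by a sum over all other mixed classes rather than only the earlier ones, which (P2) still covers because $s_{i'}-\rho_{i'}\ge0$ and $\rho_{i'}\ge0$.
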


For $\ell=2$ this is the bipartite statement, with $A=V_1$, and \textup{(P1)} is the single equation $\sum_{i\in\IA}s_i+\sum_{i\in\IM}\rho_i=|A|$; that is the only case used before Section~\ref{sec:multi}. The allowance for $\ell>2$ accommodates the case in which $A$ is not independent: it is enough that each class, and each anchor set, lies inside a single part.

\begin{proof}
Since each $V_j$ is independent, any family of disjoint subsets of a single $V_j$ consists of independent sets.

\emph{Phase 1.} By (P1) we have $\sum_{i\in\IA:\,j(i)=j}s_i\le|V_j|$ for each $j<\ell$, so we may choose pairwise disjoint sets $D_i\subseteq V_{j(i)}$ with $|D_i|=s_i$ for $i\in\IA$, arbitrarily. Let $R^{(j)}:=V_j\setminus\bigcup_{i\in\IA:\,j(i)=j}D_i$ be the leftover vertices of $V_j$, and $R:=\bigcup_{j<\ell}R^{(j)}$; by (P1), $|R^{(j)}|=\sum_{i\in\IM:\,j(i)=j}\rho_i$.

\emph{Phase 2.} Enumerate $\IM=\{i_1,\dots,i_t\}$ in any order. We build $D_{i_1},\dots,D_{i_t}$ one at a time. When it is the turn of $i:=i_\sigma$, choose any set $R_i$ of $\rho_i$ as yet unused vertices of $R^{(j(i))}$ (possible, as the $\rho_{i'}$ with $j(i')=j(i)$ sum to $|R^{(j(i))}|$), and let $D_i:=R_i\cup T_i$ where $T_i$ consists of any $s_i-\rho_i$ vertices of $B$ that are neither contained in an earlier mixed class nor adjacent to $R_i$. Such vertices exist: the earlier mixed classes contain $\sum_{u<\sigma}(s_{i_u}-\rho_{i_u})$ vertices of $B$, while $|N(R_i)|\le\rho_i\Delta$, so the number of admissible vertices is at least
\[
|B|-\sum_{u<\sigma}\bigl(s_{i_u}-\rho_{i_u}\bigr)-\rho_i\Delta
\;\ge\;
|B|-\sum_{u\le\sigma} s_{i_u}+\bigl(s_i-\rho_i\bigr)-\rho_i(\Delta-1)
\;\ge\; s_i-\rho_i,
\]
where the last inequality uses $\sum_{u\le\sigma}s_{i_u}\le\sum_{i'\in\IM}s_{i'}$ together with $\rho_i\le\max_{i'\in\IM}\rho_{i'}$ and (P2). Since $R_i\subseteq V_{j(i)}$ is independent, $T_i\subseteq B$ is independent, and no vertex of $T_i$ has a neighbor in $R_i$, the set $D_i$ is independent and $|D_i\cap A|=\rho_i$.

\emph{Phase 3.} After Phase~2 every vertex of $A$ is used. Summing (P1) over $j$ gives $\sum_{i\in\IA}s_i+\sum_{i\in\IM}\rho_i=|A|$, so the number of unused vertices of $B$ equals
\[
|B|-\sum_{i\in\IM}\bigl(s_i-\rho_i\bigr)
= n-|A|-\sum_{i\in\IM}s_i+\Bigl(|A|-\sum_{i\in\IA}s_i\Bigr)
= \sum_{i\in\IB}s_i ,
\]
using $\sum_{i\in[k]}s_i=n$. Partition these vertices arbitrarily into sets $D_i$, $i\in\IB$, with $|D_i|=s_i$. All requirements are met.
\end{proof}

\begin{figure}[t]
\centering
\begin{tikzpicture}[xscale=1.25,yscale=0.95]
\draw[thick] (0,0) rectangle (4.4,1.1); \node at (-0.55,0.55) {$A$};
\draw[thick] (0,-2.0) rectangle (9.6,-0.9); \node at (-0.55,-1.45) {$B$};
\draw[fill=black!12] (0,0) rectangle (2.9,1.1);
\node at (1.45,0.55) {\small pure classes $\IA$};
\draw[fill=black!30] (2.9,0) rectangle (4.4,1.1);
\node at (3.65,0.55) {\small $R$};
\draw[fill=black!30] (0,-2.0) rectangle (3.4,-0.9);
\node at (1.7,-1.45) {\small mixed classes $\IM$};
\draw[fill=black!12] (3.4,-2.0) rectangle (9.6,-0.9);
\node at (6.5,-1.45) {\small pure classes $\IB$};
\draw[->,thick] (3.65,-0.05) -- (1.9,-0.85);
\node[anchor=west,align=left] at (4.6,-0.35) {\small each $D_i$, $i\in\IM$: $\rho_i$ anchors in $R$,\\ \small completed by $s_i-\rho_i$ non-neighbors in $B$};
\end{tikzpicture}
\caption{The three phases of Lemma~\ref{lem:anchor}, drawn for $\ell=2$. Condition (P1) says the shaded regions of $A$ tile $A$ exactly; condition (P2) reserves enough of $B$ for the mixed classes even after their anchors forbid up to $\rho_i\Delta$ vertices each.}
\label{fig:anchor}
\end{figure}

Two further tools convert Lemma~\ref{lem:anchor} into equitable colorings. The first distributes the leftover vertices of $A$ evenly among the mixed classes.

\begin{observation}\label{obs:distribute}
Let $t\ge 1$ and $0\le M\le tH$ for integers $M,H\ge0$. Then there are integers $\rho_1,\dots,\rho_t$ with $0\le\rho_i\le H$, $\sum_i\rho_i=M$ and $\max_i\rho_i\le\ceil{M/t}$: take $\rho_i=\ceil{M/t}$ for $i\le M-t\floor{M/t}$ and $\rho_i=\floor{M/t}$ otherwise. Since $M\le tH$, we have $\ceil{M/t}\le H$.
\end{observation}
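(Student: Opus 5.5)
The statement is elementary: we need integers $\rho_i\in[0,H]$ summing to $M$, with $\max_i\rho_i\le\ceil{M/t}$. I will verify the explicit choice given in the statement directly. First write $M=t\floor{M/t}+e$ with $e:=M-t\floor{M/t}$, so that $0\le e<t$ by the division algorithm. This uses only $t\ge1$ and $M\ge0$.

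Next I check the sum. Define $\rho_i:=\ceil{M/t}$ for $i\le e$ and $\rho_i:=\floor{M/t}$ for $e<i\le t$. If $e=0$, then $t$ divides $M$, all $\rho_i=M/t$, and the sum is $M$. If $e>0$, then $\ceil{M/t}=\floor{M/t}+1$, and the sum is
\[
e\bigl(\floor{M/t}+1\bigr)+(t-e)\floor{M/t}=t\floor{M/t}+e=M .
\]
By construction every $\rho_i$ is either $\floor{M/t}$ or $\ceil{M/t}$. Hence $\rho_i\ge\floor{M/t}\ge0$, because $M\ge0$, and $\max_i\rho_i\le\ceil{M/t}$.

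The only remaining point is the upper bound $\rho_i\le H$. From $M\le tH$ we get $M/t\le H$. Since $H$ is an integer and $\ceil{\cdot}$ gives the least integer at least its argument, $\ceil{M/t}\le H$. This yields $\rho_i\le\ceil{M/t}\le H$ for every $i$.

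No step is a real obstacle. The one place needing care is the case split $e=0$ versus $e>0$ in the sum identity, since $\ceil{M/t}=\floor{M/t}+1$ holds only when $e>0$.
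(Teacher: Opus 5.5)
Your proposal is correct and follows the paper's argument: you verify the same explicit balanced choice, with $M-t\floor{M/t}$ entries equal to $\ceil{M/t}$ and the rest equal to $\floor{M/t}$, and you deduce $\ceil{M/t}\le H$ from $M/t\le H$ and the integrality of $H$. Your case split $e=0$ versus $e>0$ just writes out the sum check that the paper leaves to the reader.
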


The second is a normalized decomposition of the parts adapted to a $(q,q{+}1)$-profile. There are $k$ classes to distribute, $r$ of size $q+1$ and $k-r$ of size $q$. Each part $V_j$ other than $B$ should take as many of them as it can fill exactly, leaving a residue $M_j$ small enough for the mixed classes to absorb; and the parts must not between them exhaust the supply, since $t$ classes are reserved for mixing and the rest go to $B$. The lemma says that a single greedy pass, giving each part the largest class its remaining vertices can hold, achieves all of this: every residue is at most the size of one class, and at least $t$ classes survive. Hypothesis \eqref{eq:Hcond} guarantees the supply cannot run out, and it says exactly that $B$ is large enough to absorb the $t$ reserved classes. Only the residue bound $M_j\le q$ requires care; the remaining conclusions are immediate from the construction.

\begin{lemma}[Multipart normalized form]\label{lem:multinormal}
Let $q,k\ge1$, $0\le r<k$ and $n=kq+r$. Let $a_1,\dots,a_{\ell-1}\ge0$ and $b\ge0$ satisfy $\sum_{j}a_j+b=n$, and let $1\le t\le k$ be such that
\begin{equation}\label{eq:Hcond}
b\;\ge\;r+tq .
\end{equation}
Then there are integers $u_j,v_j,M_j\ge0$ $(1\le j\le\ell-1)$ with
\[
a_j=u_j(q+1)+v_jq+M_j,\qquad 0\le M_j\le q,
\]
\[
\sum_j u_j\le r,\qquad \sum_j v_j\le k-r,\qquad \sum_j\bigl(u_j+v_j\bigr)\le k-t .
\]
In particular $\sum_j M_j\le(\ell-1)q$, and setting $y:=k-t-\sum_j(u_j+v_j)$ one has $y\ge0$ and $0\le r-\sum_j u_j\le t+y$.
\end{lemma}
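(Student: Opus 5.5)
The plan is a single greedy pass over the parts $V_1,\dots,V_{\ell-1}$, exactly as the paragraph before the lemma describes. I maintain two counters: $U$, the number of unused classes of size $q+1$ (initially $r$), and $W$, the number of unused classes of size $q$ (initially $k-r$). I process $j=1,\dots,\ell-1$ in turn, with $x$ denoting the number of vertices of $V_j$ not yet assigned (initially $x=a_j$). At each step the rule is as follows:
\begin{itemize}
\item if $x\ge q+1$ and $U>0$, take a class of size $q+1$, so that $u_j$ increases by one, $U$ decreases by one and $x$ decreases by $q+1$;
\item otherwise, if $x\ge q$ and $W>0$, take a class of size $q$, with the analogous updates to $v_j$, $W$ and $x$;
\item otherwise, stop and set $M_j:=x$.
\end{itemize}
By construction $a_j=u_j(q+1)+v_jq+M_j$, $\sum_j u_j\le r$, $\sum_j v_j\le k-r$, and all quantities are nonnegative integers.

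The two substantive claims are the capacity bound $\sum_j(u_j+v_j)\le k-t$ and the residue bound $M_j\le q$. I prove the capacity bound first, by counting. Let $c:=\sum_j(u_j+v_j)$ be the number of classes assigned in total. Each assigned class has at least $q$ vertices, all taken from $A$. Hence
\[
cq\;\le\;\sum_j a_j\;=\;n-b\;\le\;n-r-tq\;=\;(k-t)q ,
\]
using \eqref{eq:Hcond}. Since $q\ge1$, this gives $c\le k-t$. The same bound holds at every intermediate moment of the pass, since $c$ only grows.

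The residue bound is the step needing care, and I expect it to be the main obstacle, though it reduces to the capacity bound. The pass stops on part $j$ with $x\ge q+1$ only if $U=0$ (the first rule failed) and then $W=0$ (the second rule failed, as $x\ge q$). In that situation all $k$ classes have been assigned, so $c=k$ at that moment. This contradicts $c\le k-t\le k-1$, which uses $t\ge1$. Therefore every stop has $x\le q$, that is, $0\le M_j\le q$. Summing over $j$ gives $\sum_j M_j\le(\ell-1)q$.

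Finally, $y=k-t-c\ge0$ is the capacity bound again. For the last claim, $r-\sum_j u_j=U\ge0$ is the number of unused large classes. This is at most the total number of unused classes $U+W=k-c=t+y$, which gives $0\le r-\sum_j u_j\le t+y$.
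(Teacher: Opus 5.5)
Your proof is correct and follows the paper's argument: the same greedy pass with two budgets, the same count $q\sum_j(u_j+v_j)\le n-b\le(k-t)q$, and the same contradiction for a stop with $x\ge q+1$, which forces both budgets to be exhausted. The only difference is cosmetic: you rule out exhaustion using the capacity bound together with $t\ge1$, whereas the paper notes that allocating all $k$ classes would use all $n$ vertices, forcing $b\le0$ and contradicting $b\ge r+tq\ge1$.
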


\begin{proof}
Maintain budgets $r'$ and $s'$, initially $r'=r$ and $s'=k-r$, counting the still unallocated classes of size $q+1$ and of size $q$. Process the parts in any order. For part $j$, let $x$ denote the number of its vertices not yet allocated, initially $a_j$, and repeat: if $x\ge q+1$ and $r'>0$, allocate a class of size $q+1$, decreasing $r'$ and $x$ accordingly; otherwise, if $x\ge q$ and $s'>0$, allocate a class of size $q$; otherwise stop. Let $u_j$ and $v_j$ be the numbers of classes of each size allocated to part $j$, and $M_j$ the final value of $x$. The identity $a_j=u_j(q+1)+v_jq+M_j$ and the two budget inequalities hold by construction.

We claim $M_j\le q$. Since the procedure stops at part $j$ only when no further class can be allocated, it suffices to rule out a stop with $x\ge q+1$, and we show more: a stop with $x\ge q$ forces $x=q$ exactly. So suppose the procedure stops at part $j$ with $x\ge q$. A class of size $q$ was not allocated although $x\ge q$, so the reason must be $s'=0$. If also $r'=0$ then all $k$ classes have been allocated, so the total number of allocated vertices is $\sum_i\bigl(u_i(q+1)+v_iq\bigr)=r(q+1)+(k-r)q=n$; but that total is at most $\sum_i a_i=n-b$, forcing $b\le0$, whereas \eqref{eq:Hcond} gives $b\ge r+tq\ge q\ge1$. Hence $r'>0$, so the only possible reason a class of size $q+1$ was not allocated is $x<q+1$. Thus $x=q$, and $M_j=q$.

For the class count, every allocated class has size at least $q$, so
\[
q\sum_j\bigl(u_j+v_j\bigr)\;\le\;\sum_j\bigl(u_j(q+1)+v_jq\bigr)\;\le\;\sum_j a_j\;=\;n-b\;\le\;n-r-tq\;=\;(k-t)q
\]
by \eqref{eq:Hcond}, whence $\sum_j(u_j+v_j)\le k-t$ and $y\ge0$. Finally $r-\sum_ju_j\ge0$ by the budget on $r'$, and $r-\sum_j u_j\le k-\sum_j u_j-\sum_j v_j=t+y$ because $\sum_j v_j\le k-r$.
\end{proof}

\begin{remark}\label{rem:special}
The bound $M_j\le q$ determines the residues that the mixed classes must absorb, and with them the hypothesis on the order in Theorem~\ref{thm:upper} (cf.\ Remark~\ref{rem:special2}). It is attained (Appendix~\ref{app:exp}), so the lemma cannot be improved as stated. Specializing Lemma~\ref{lem:anchor} and Lemma~\ref{lem:multinormal} to $\ell=2$ recovers the covering step used in the linear-bound literature, and in particular the covering lemma of~\cite[Lemma~2.2]{Nikabadi26}; arguments of that shape end with a rebalancing step --- moving vertices between classes to correct the sizes --- which is unnecessary here because the sizes are prescribed at the outset.
\end{remark}

\section{The prescribed-anchor bound}\label{sec:upper}

Throughout this section $G$ is $\ell$-colorable, with $V_1,\dots,V_\ell$, $B$, $b$, $a_j$, $q$ and $r$ as in the Notation. Given the number of colors $k$ and a number $t$ of classes to reserve for mixing, put
\begin{equation}\label{eq:betaLH}
\beta\;:=\;b-t(q+1),\qquad L\;:=\;\floor*{\frac{\beta}{\Delta-1}},\qquad H\;:=\;\min\{q,L\} .
\end{equation}
The following proposition encapsulates the requirements of the construction for a single $k$ and a single $t$. It makes no assumption relating $k$ to $\sqrt\Delta$, and the freedom to choose $t$ independently of $k$ yields the threshold statement.

\begin{proposition}[The core construction]\label{prop:core}
Let $\ell\ge2$, let $G$ be as above with $\Delta\ge2$, and let $k,t\ge1$ be integers. Suppose that
\begin{enumerate}
\item[\textup{(A0)}] $q\;\ge\;1$;
\item[\textup{(A1)}] $b\;\ge\;r+tq$;
\item[\textup{(A2)}] $L\;\ge\;1$;
\item[\textup{(A3)}] $(\ell-1)\ceil*{q/H}\;\le\;t$.
\end{enumerate}
Then $G$ has an equitable $k$-coloring, produced by the greedy construction of Lemma~\textup{\ref{lem:anchor}} with all anchor sets prescribed in advance.
\end{proposition}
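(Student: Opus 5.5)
The plan is to assemble the equitable coloring directly from Lemma~\ref{lem:multinormal} and Lemma~\ref{lem:anchor}, using Observation~\ref{obs:distribute} to split the residues. Hypothesis (A1) is exactly \eqref{eq:Hcond}, so Lemma~\ref{lem:multinormal} applies to the part sizes $a_1,\dots,a_{\ell-1},b$ with the given $t$. It yields $u_j,v_j,M_j$ with $a_j=u_j(q+1)+v_jq+M_j$ and $0\le M_j\le q$. It also shows that after the $\sum_j(u_j+v_j)$ allocated classes, exactly $t+y$ classes remain, where $y\ge0$. Of these, $r-\sum_ju_j$ have size $q+1$ and the rest have size $q$.

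Next I fix the profile $s_1,\dots,s_k$ as the $(q,q{+}1)$-profile, with $r$ entries $q+1$ and $k-r$ entries $q$. By (A0), every $s_i\ge1$. The index sets are assigned as follows.
\begin{itemize}
\item $\IA$ consists of the $u_j$ classes of size $q+1$ and the $v_j$ classes of size $q$ labelled $j$, for each $j$.
\item $\IM$ consists of any $t$ of the $t+y$ remaining classes.
\item $\IB$ consists of the other $y$ remaining classes.
\end{itemize}

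The anchors come next. By (A0) and (A2), $H=\min\{q,L\}\ge1$, so $p:=\ceil{q/H}$ is a well-defined positive integer. By (A3), $(\ell-1)p\le t$. I therefore partition $\IM$ into blocks $J_1,\dots,J_{\ell-1}$ with $|J_j|=p$, plus a leftover set of $t-(\ell-1)p$ classes.
\begin{itemize}
\item Classes in $J_j$ get label $j$. Observation~\ref{obs:distribute} is applied with $M=M_j$, $t=p$ and bound $H$; this is legal because $M_j\le q\le pH$. It gives $\rho_i\le H\le q\le s_i$ for $i\in J_j$, summing to $M_j$.
\item Leftover mixed classes get an arbitrary label and $\rho_i=0$.
\end{itemize}
Then (P1) holds: for each $j$, the $\IA$-classes labelled $j$ contribute $u_j(q+1)+v_jq$, and the anchors contribute $M_j$, for a total of $a_j$.

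Condition (P2) follows from $\sum_{i\in\IM}s_i\le t(q+1)$ together with
\[
(\Delta-1)\max_i\rho_i\le(\Delta-1)H\le(\Delta-1)L\le\beta=b-t(q+1).
\]
Lemma~\ref{lem:anchor} then produces independent classes of exactly the equitable sizes, i.e.\ an equitable $k$-coloring. The anchor counts $\rho_i$ depend only on $k,t,q,r$ and the part sizes, so they are prescribed in advance.

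I expect the main obstacle to be the bookkeeping: checking that $\IM$ has exactly $t$ classes available, and that the mixed-class sizes are bounded by $q+1$ whatever the mix of sizes among the remaining $t+y$ classes. Both are handled by the final clause of Lemma~\ref{lem:multinormal} ($y\ge0$ and $0\le r-\sum_ju_j\le t+y$), so no further case analysis should be needed.
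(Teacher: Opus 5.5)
Your argument is correct and is essentially the paper's proof. You use Lemma~\ref{lem:multinormal} via (A1), Observation~\ref{obs:distribute} with cap $H\ge1$, and (P2) from $\sum_{i\in\IM}s_i\le t(q+1)$ and $(\Delta-1)H\le(\Delta-1)L\le\beta$. The only difference is cosmetic: you take exactly $t$ mixed classes, $\ceil{q/H}$ per part and some with $\rho_i=0$, whereas the paper takes $\ceil{M_j/H}$ per part and allows $|\IM|\le t$. You did skip one hypothesis of Lemma~\ref{lem:multinormal}, namely $t\le k$, but it follows at once from (A1) and (A0), since $tq\le b-r\le n-r=kq$ and $q\ge1$.
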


\begin{proof}
By (A0) the profile is well defined. Also $t\le k$, as Lemma~\ref{lem:multinormal} requires, since (A1) gives $tq\le b-r\le n-r=kq$ and $q\ge1$ by (A0). Condition (A1) is hypothesis \eqref{eq:Hcond}, so Lemma~\ref{lem:multinormal} supplies integers $u_j,v_j,M_j$ with $a_j=u_j(q+1)+v_jq+M_j$ and $M_j\le q$, with $\sum_j(u_j+v_j)\le k-t$, and, setting $y:=k-t-\sum_j(u_j+v_j)\ge0$, with $0\le r-\sum_ju_j\le t+y$. By (A2), $H\ge1$, so $t_j:=\ceil{M_j/H}$ is well defined, and $\sum_{j<\ell}t_j\le(\ell-1)\ceil{q/H}\le t$ by (A3) and $M_j\le q$.

Apply Lemma~\ref{lem:anchor} with the given $\ell$-partition and the profile consisting of $r$ classes of size $q+1$ and $k-r$ of size $q$. Let $\IA$ consist of $\sum_j(u_j+v_j)$ classes, those labeled $j$ comprising $u_j$ of size $q+1$ and $v_j$ of size $q$; let $\IM$ consist of $\sum_jt_j$ further classes, $t_j$ of them labeled $j$; and let $\IB$ be the rest. Then $|\IM|\le t$ and $|\IM\cup\IB|=t+y$, so the remaining $r-\sum_ju_j$ classes of size $q+1$ can be placed among them. Since $M_j\le t_jH$, Observation~\ref{obs:distribute} yields integers $\rho_i\le H\le q\le s_i$ for the $t_j$ classes labeled $j$, with $\sum_{j(i)=j}\rho_i=M_j$. Condition (P1) holds for every $j$, since $u_j(q+1)+v_jq+M_j=a_j$, and (P2) holds because
\[
\sum_{i\in\IM}s_i+(\Delta-1)\max_{i\in\IM}\rho_i\;\le\;t(q+1)+(\Delta-1)L\;\le\;t(q+1)+\beta\;=\;b .
\]
Lemma~\ref{lem:anchor} partitions $V(G)$ into $k$ independent sets, exactly $r$ of size $q+1$ and $k-r$ of size $q$: an equitable $k$-coloring.
\end{proof}

Conditions (A0)--(A3) are monotone in $k$ once $t$ is held fixed:

\begin{lemma}\label{lem:mono}
Let $k_1\le k$ be integers with $q(k)\ge1$, let $t\ge1$, and suppose $r(k)+tq(k)\le b$. If \textup{(A2)} and \textup{(A3)} hold at $k_1$, they hold at $k$.
\end{lemma}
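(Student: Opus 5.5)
The plan is to show that, with $t$ fixed, the quantities $L$ and $H$ in \eqref{eq:betaLH} are nonincreasing in $q$ and hence nondecreasing in $k$, while $q$ itself is nonincreasing in $k$. Both (A2) and (A3) then transfer directly from $k_1$ to $k$.

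The first step is monotonicity of $q$. Since $q(k)=\floor{n/k}$ and $k_1\le k$, we have $q(k)\le q(k_1)$.

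The second step is $L$. Because $\beta=b-t(q+1)$ is decreasing in $q$ with $b$ and $t$ fixed, we get $\beta(k)\ge\beta(k_1)$. The floor $\floor{\cdot/(\Delta-1)}$ is monotone, so $L(k)\ge L(k_1)\ge1$, and (A2) holds at $k$. Here $\Delta\ge2$ is implicit, as in Proposition~\ref{prop:core}; if $\Delta=1$ the expression for $L$ is undefined and the statement is vacuous. The hypothesis $r(k)+tq(k)\le b$ is not needed for this step. It is only there so that (A1) holds at $k$ and the conclusion can feed into Proposition~\ref{prop:core}.

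The third step is (A3), and it is the only point needing care, because $H=\min\{q,L\}$ appears in a ratio $q/H$ in which both numerator and denominator may change. I will bound $\ceil{q/H}$ by cases.

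If $H(k)=q(k)$, then $\ceil{q(k)/H(k)}=1$. Also $\ceil{q(k_1)/H(k_1)}\ge1$, so
\[
(\ell-1)\ceil{q(k)/H(k)}\le(\ell-1)\ceil{q(k_1)/H(k_1)}\le t.
\]
The division is legitimate because $q(k)\ge1$.

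If $H(k)=L(k)<q(k)$, then $q(k)/L(k)\le q(k_1)/L(k)\le q(k_1)/L(k_1)$. Moreover $H(k_1)\le L(k_1)$, so $q(k_1)/L(k_1)\le q(k_1)/H(k_1)$. Taking ceilings, which preserves order, gives $\ceil{q(k)/H(k)}\le\ceil{q(k_1)/H(k_1)}$.

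A cleaner way to organize the whole step is the identity $q/H=\max\{1,q/L\}$. The map $k\mapsto q/L$ is nonincreasing, since $q$ decreases and $L$ increases, and so $\max\{1,q/L\}$ is nonincreasing as well. Ceilings preserve this, which yields (A3) at $k$ in a single line.
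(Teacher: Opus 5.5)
Your proof is correct and follows the paper's argument: $q$ is nonincreasing in $k$, hence $\beta$ and $L$ are nondecreasing, which gives (A2), and (A3) follows from the same case split, $H=q$ versus $H=L<q$. In the second case you use $H(k_1)\le L(k_1)$ directly, whereas the paper notes that $L(k_1)\le L(k)<q(k)\le q(k_1)$ forces $H(k_1)=L(k_1)$; both are valid, as is your repackaging via $q/H=\max\{1,q/L\}$.
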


\begin{proof}
Write $q_1:=q(k_1)$ and $q:=q(k)$, and let $\beta_1,L_1,H_1$ and $\beta,L,H$ be the corresponding quantities \eqref{eq:betaLH}. Since $k\ge k_1$ we have $q=\floor{n/k}\le\floor{n/k_1}=q_1$, hence $\beta=b-t(q+1)\ge\beta_1$ and $L\ge L_1\ge1$, which is (A2). For (A3), if $L\ge q$ then $H=q$ and $\ceil{q/H}=1$, so $(\ell-1)\ceil{q/H}=\ell-1\le(\ell-1)\ceil{q_1/H_1}\le t$. Otherwise $H=L$ and
\[
\ceil*{\frac qH}\;=\;\ceil*{\frac qL}\;\le\;\ceil*{\frac{q_1}{L_1}}\;=\;\ceil*{\frac{q_1}{H_1}},
\]
the last equality because $L<q$ forces $L_1\le L<q\le q_1$, so $H_1=L_1$ as well. Hence $(\ell-1)\ceil{q/H}\le t$.
\end{proof}

We can now prove Theorem~\ref{thm:upper}, in its threshold form.

\begin{proof}[Proof of Theorem~\textup{\ref{thm:upper}}]
Fix $\ell\ge2$, $\zeta>2\ell$ and $\gamma>2\ell\sqrt{\zeta(\ell-1)/(\zeta-2\ell)}$; the latter is equivalent to
\begin{equation}\label{eq:leading}
\zeta\bigl(\gamma^2-4\ell^2(\ell-1)\bigr)\;>\;2\ell\gamma^2 .
\end{equation}
All inequalities below compare polynomials in $k_1$, so comparing leading coefficients determines an explicit $\Delta_0=\Delta_0(\ell,\zeta,\gamma)$: fix it so large that for every $\Delta\ge\Delta_0$, writing $k_1:=\ceil{\gamma\sqrt\Delta}$,
\begin{enumerate}
\item[(C0)] $\Delta\ge2$,\quad $k_1\ge2\ell^2$,\quad $k_1\le n/(2\ell)$,\quad and\quad $n>k_1+2\ell\Delta$;
\item[(C1)] $\gamma^2(k_1-2\ell^2)\;>\;4\ell^2(\ell-1)k_1$;
\item[(C2)] $\zeta(k_1-1)^2\bigl(\gamma^2(k_1-2\ell^2)-4\ell^2(\ell-1)k_1\bigr)\;\ge\;\gamma^2\bigl(2\ell k_1^3+\gamma^2k_1^2\bigr)$.
\end{enumerate}
Here $k_1\to\infty$ with $\Delta$ while $n\ge\zeta\Delta\ge\zeta(k_1-1)^2/\gamma^2$ grows quadratically in $k_1$, which gives (C0); (C1) is clear; and in (C2) both sides are cubics in $k_1$ with leading coefficients $\zeta(\gamma^2-4\ell^2(\ell-1))$ and $2\ell\gamma^2$, so \eqref{eq:leading} makes (C2) valid for all large $k_1$.

Let $G$ be $\ell$-colorable with $\Delta\ge\Delta_0$ and $n\ge\zeta\Delta$, and set $t:=\floor{k_1/(2\ell)}\ge1$, so that
\begin{equation}\label{eq:kDelta}
\frac{(k_1-1)^2}{\gamma^2}\;\le\;\Delta\;\le\;\frac{k_1^2}{\gamma^2},
\qquad\text{hence}\qquad
n\;\ge\;\zeta\Delta\;\ge\;\frac{\zeta(k_1-1)^2}{\gamma^2} .
\end{equation}

\emph{Step 1: the hypotheses hold at $k_1$.} Condition (A0) holds, that is, $q_1\ge1$, because $k_1\le n/(2\ell)\le n$ by (C0). Since $b\ge n/\ell$, $tq_1\le\frac{k_1}{2\ell}\cdot\frac n{k_1}=\frac n{2\ell}$ and $r<k_1\le n/(2\ell)$, condition (A1) holds. For (A2) we record the estimate on $L_1$ once and in full. Since $t\le k_1/(2\ell)$ and $q_1\le n/k_1$ we have $t(q_1+1)\le\frac{n+k_1}{2\ell}$, so $\beta_1\ge\frac n\ell-\frac{n+k_1}{2\ell}=\frac{n-k_1}{2\ell}$, and therefore
\begin{equation}\label{eq:Lbound}
L_1\;=\;\floor*{\frac{\beta_1}{\Delta-1}}\;\ge\;\floor*{\frac{\beta_1}{\Delta}}\;\ge\;\frac{\beta_1}{\Delta}-1\;\ge\;\frac{n-k_1}{2\ell\Delta}-1\;=\;\frac{n-k_1-2\ell\Delta}{2\ell\Delta}\;>\;0 ,
\end{equation}
the last inequality by the clause $n>k_1+2\ell\Delta$ of (C0); in particular $L_1\ge1$. Only the third step discards a floor: subtracting $1$ removes $2\ell\Delta$ from the numerator, the term provided for in (C0); the final expression in \eqref{eq:Lbound} is the one used below. For (A3), if $L_1\ge q_1$ then $\ceil{q_1/H_1}=1$ and $(\ell-1)\le t$ by (C0). Otherwise $H_1=L_1$, and since $\ceil{q_1/L_1}\le q_1/L_1+1$ it suffices that $(\ell-1)q_1/L_1\le t-(\ell-1)$. Here $q_1\le n/k_1$, and $t\ge(k_1-2\ell)/(2\ell)$ gives $t-(\ell-1)\ge(k_1-2\ell^2)/(2\ell)$, while $L_1\ge\frac{n-k_1-2\ell\Delta}{2\ell\Delta}>0$ by \eqref{eq:Lbound}. It therefore suffices that
\[
4\ell^2(\ell-1)\,n\Delta\;\le\;k_1(k_1-2\ell^2)\bigl(n-2\ell\Delta-k_1\bigr) .
\]
By \eqref{eq:kDelta} we may replace $\Delta$ by $k_1^2/\gamma^2$ on the left and in the term $-2\ell\Delta$ on the right; multiplying by $\gamma^2/k_1$ and rearranging, it suffices that
\[
n\bigl(\gamma^2(k_1-2\ell^2)-4\ell^2(\ell-1)k_1\bigr)\;\ge\;(k_1-2\ell^2)\bigl(\gamma^2k_1+2\ell k_1^2\bigr).
\]
The coefficient of $n$ is positive by (C1), so by \eqref{eq:kDelta} the left side is at least $\zeta(k_1-1)^2\bigl(\gamma^2(k_1-2\ell^2)-4\ell^2(\ell-1)k_1\bigr)/\gamma^2$, while the right side is at most $\gamma^2k_1^2+2\ell k_1^3$. This is (C2) divided by $\gamma^2$. Proposition~\ref{prop:core} therefore gives an equitable $k_1$-coloring.

\emph{Step 2: the hypotheses persist for $k_1\le k\le\Delta$.} Keep $t$ at the value $\floor{k_1/(2\ell)}$ just fixed. Then $q(k)\ge\floor{n/\Delta}\ge\floor{\zeta}\ge1$, which is (A0), and, since $r<k\le\Delta\le n/\zeta\le n/(2\ell)$ and $tq(k)\le tq_1\le n/(2\ell)$, we get $r+tq(k)\le n/\ell\le b$, which is (A1). Conditions (A2) and (A3) follow from Lemma~\ref{lem:mono} and Step~1. So Proposition~\ref{prop:core} gives an equitable $k$-coloring for every such $k$.

\emph{Step 3: large $k$.} For $k\ge\Delta+1$ the Hajnal--Szemer\'edi theorem~\cite{HajnalSzemeredi70}, applied with maximum-degree parameter $k-1\ge\Delta$, gives an equitable $k$-coloring. Hence $G$ is equitably $k$-colorable for every $k\ge k_1$, that is, $\chies(G)\le k_1$. \qedhere
\end{proof}

\begin{remark}\label{rem:frozen}
Freezing $t$ at $\floor{k_1/(2\ell)}$ for the whole range makes Step~2 immediate. Reserving $t$ classes costs $t(q+1)$ vertices of $B$, and $q$ decreases as $k$ grows, so a reservation adequate at $k_1$ is more than adequate at every larger $k$; meanwhile the demand $(\ell-1)\ceil{q/H}$ only falls. Letting $t$ grow with $k$, as would be natural, destroys this monotonicity and forces a separate analysis at the upper endpoint, which costs the earlier treatments of this range an additive constant in the hypothesis on the order.
\end{remark}

\subsection*{Bipartite graphs with little room}
We now prove Theorem~\ref{thm:smallzeta}. The missing ingredient is that when the two parts are of comparable size no anchoring is needed at all --- the vertex set splits into classes lying wholly inside one part, and only an arithmetic condition has to be met.

\begin{lemma}[Arithmetic split]\label{lem:arith}
Let the vertex set of a graph be partitioned into two independent sets $A,B$ with $a:=|A|\le|B|=:b$, and let $q\ge1$ be an integer with $q^2\le a$ and $4q^2+2q\le n$. Then the graph has an equitable $k$-coloring with $k:=\floor{n/q}-1$.
\end{lemma}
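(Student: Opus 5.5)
The plan is to make every class lie wholly inside $A$ or wholly inside $B$, so that no anchoring is needed. Set $k:=\floor{n/q}-1$ and write $n=kq'+r'$ with $q':=\floor{n/k}$ and $0\le r'<k$.

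\emph{Step 1: the profile.} First I would show $q'=q$. Since $k\le n/q-1<n/q$, we have $n/k>q$, so $q'\ge q$. Conversely $k\ge n/q-2$, so
\[
\frac nk\;\le\;\frac{nq}{n-2q}\;=\;q+\frac{2q^2}{n-2q}\;\le\;q+\tfrac12 ,
\]
using $n-2q\ge4q^2$. Hence $q'=q$. Next, $r'=n-kq\ge n-(n/q-1)q=q$. So an equitable $k$-coloring has $r'\ge q$ classes of size $q+1$ and $k-r'$ classes of size $q$.

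\emph{Step 2: splitting $A$.} It suffices to find integers $x,y\ge0$ with
\[
x(q+1)+yq=a,\qquad x\le r',\qquad y\le k-r'.
\]
Then $A$, being independent, is cut arbitrarily into $x$ classes of size $q+1$ and $y$ of size $q$. The remaining $r'-x$ classes of size $q+1$ and $k-r'-y$ of size $q$ have total size $n-a=b$, so $B$ is cut into them. Every class then lies in one part and is independent.

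Write $a=mq+e$ with $0\le e<q$. Since $a\ge q^2$, we have $m\ge q>e$. The solutions are the pairs
\[
x=e+jq,\qquad y=m-e-j(q+1),\qquad j\ge0 ,
\]
and the base solution $j=0$ already has $x=e<q\le r'$. I would choose $j$ as large as possible subject to $x\le r'$ and $y\ge0$.

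\emph{Step 3: the remaining constraint (main obstacle).} The step needing care is verifying $y\le k-r'$ for that choice of $j$, which is where $a\le b$ enters. The $B$-side demand is $(r'-x)(q+1)+(k-r'-y)q=b\ge a$, so the bound can fail only if $A$ grabs too many size-$q$ classes while leaving size-$(q+1)$ classes unused. Two cases arise from the maximality of $j$. If increasing $j$ is blocked by $x+q>r'$, then $r'-x<q$, so fewer than $q$ size-$(q+1)$ classes are left over. If instead it is blocked by $y<q+1$, then $y\le q$. In each case I would compare with $b\ge a\ge q^2$ to force $k-r'-y\ge0$. I expect some slack from $n\ge4q^2+2q$ to be needed for the boundary configurations. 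If this bookkeeping proves awkward, the fallback is to check directly that $y\le m\le a/q\le n/(2q)$ and compare this with $k-r'$ after increasing $x$ as far as allowed.
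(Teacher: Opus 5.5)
Your Steps 1 and 2 are sound, and the split they set up is the one the paper uses. The paper takes exactly your base solution $j=0$: its $x=\floor{a/q}$ and $u=a\bmod q$ are your $m$ and $e$.

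The gap is Step 3, which you do not carry out. It is the whole content of the lemma, since it is the only place where $a\le b$ and $4q^2+2q\le n$ are actually used. Moreover, the route you sketch fails in one of your two cases.

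\begin{itemize}
\item When $j$ is blocked by $x+q>r'$, comparing with $b\ge q^2$ does work: $(r'-x)(q+1)\le q^2-1<b$ leaves room for size-$q$ classes on the $B$ side.
\item When $j$ is blocked by $y\le q$, no comparison with $b\ge a\ge q^2$ can give $y\le k-r'$. Take $K_{4,4}$ with $q=2$. Then $a=b=q^2$, $k=3$, $r'=2$, and the maximal $j$ is $0$, blocked by $y=2\le q$, with $k-r'=1<y$. Indeed $K_{4,4}$ has no equitable $3$-coloring. The only hypothesis it violates is $n\ge 4q^2+2q$.
\end{itemize}

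So in this case the order hypothesis must enter through the size of $k-r'$. Your Step 1 spends it only on showing $\floor{n/k}=q$, for which $n>2q(q+1)$ would already do.

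The missing ingredient is an \emph{upper} bound on $r'$; you only proved $r'\ge q$. Since $r'=n-(\floor{n/q}-1)q=q+(n\bmod q)$, one has $q\le r'\le 2q-1$. With this, your fallback closes at $j=0$ and no maximization over $j$ is needed. Using $k\ge n/q-2$ and $a\le n/2$ (which is where $a\le b$ enters),
\[
y=m-e\;\le\;\frac aq\;\le\;\frac n{2q}\;\le\;\frac nq-2q-1\;\le\;k-r'.
\]
Here the third inequality is equivalent to $n\ge 4q^2+2q$, and the last uses $r'\le 2q-1$. This is the paper's argument, written there as $k-m\ge b/q-2\ge n/(2q)-2\ge 2q-1\ge r'-e$.
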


\begin{proof}
Since $kq\le n-q$ we have $n-kq\ge q\ge1$, so $\floor{n/k}\ge q$; and $(q+1)k\ge(q+1)(\tfrac nq-2)=n+\tfrac nq-2q-2>n$ because $n>2q(q+1)$, so $\floor{n/k}=q$. Writing $n=kq+r$ gives $r=q+(n\bmod q)$, hence
\begin{equation}\label{eq:rrange}
q\;\le\;r\;\le\;2q-1 .
\end{equation}
Put $x:=\floor{a/q}$ and $u:=a\bmod q$, so that $a=xq+u$ and $0\le u\le q-1$. From $q^2\le a$ we get $x\ge q>u$, and \eqref{eq:rrange} gives $u<q\le r$. Also $x\le a/q\le n/(2q)\le k$ and, using $b\ge n/2$,
\[
k-x\;\ge\;\Bigl(\frac nq-2\Bigr)-\frac aq\;=\;\frac bq-2\;\ge\;\frac{n}{2q}-2\;\ge\;2q-1\;\ge\;r-u ,
\]
the third inequality by $4q^2+2q\le n$. Give $A$ exactly $x$ classes, $u$ of size $q+1$ and $x-u$ of size $q$, and give $B$ the remaining $k-x$ classes, $r-u$ of them of size $q+1$; the counts match because $b=(k-x)q+(r-u)$. Every class lies inside $A$ or inside $B$, hence is independent.
\end{proof}

We restate Theorem~\ref{thm:smallzeta} in the form proved: for every $\zeta>1$ and every $\gamma>C(\zeta):=\max\bigl\{4\sqrt{\zeta/(\zeta-1)},\,2\sqrt{10}\bigr\}$ there is $\Delta_0=\Delta_0(\zeta,\gamma)$ such that every bipartite graph $G$ with $\Delta\ge\Delta_0$ and $n\ge\zeta\Delta$ satisfies $\chie(G)\le\ceil{\gamma\sqrt\Delta}$.

\begin{proof}[Proof of Theorem~\textup{\ref{thm:smallzeta}}]
Let $A,B$ be the parts with $a:=|A|\le b:=|B|$. A vertex of maximum degree lies in one part and all its neighbors in the other, so
\begin{equation}\label{eq:bgeD}
b\;\ge\;\Delta .
\end{equation}
If $n\ge8\Delta$ then Theorem~\ref{thm:upper}, applied with $\ell=2$, $\zeta=8$ and any $\gamma'\in(4\sqrt2,\gamma)$, already gives the conclusion, since $4\sqrt2=5.657\ldots<2\sqrt{10}\le\gamma$. So assume $\zeta\Delta\le n<8\Delta$, and set $\alpha^{*}:=(\zeta-1)/(2\zeta)$.

\emph{Case 1: $a\ge\alpha^{*}n$.} Take $q:=\floor{\sqrt{\min\{a,n/5\}}}$. Then $q^2\le a$, and $4q^2+2q\le\tfrac45n+2\sqrt n\le n$ for large $n$, so Lemma~\ref{lem:arith} gives an equitable $k$-coloring with
\[
k\;=\;\floor*{\frac nq}-1\;\le\;\bigl(1+o(1)\bigr)\max\Bigl\{\frac{n}{\sqrt a},\ \sqrt{5n}\Bigr\}
\;\le\;\bigl(1+o(1)\bigr)\max\Bigl\{\sqrt{\frac{n}{\alpha^{*}}},\ \sqrt{5n}\Bigr\} .
\]
Since $n<8\Delta$, this is at most $\bigl(1+o(1)\bigr)\max\bigl\{4\sqrt{\zeta/(\zeta-1)},\,2\sqrt{10}\bigr\}\sqrt\Delta\le\gamma\sqrt\Delta$ for large $\Delta$.

\emph{Case 2: $a<\alpha^{*}n$.} Then, using $\Delta\le n/\zeta$ and \eqref{eq:bgeD},
\begin{equation}\label{eq:bDelta}
b-\Delta\;=\;n-a-\Delta\;>\;n\bigl(1-\alpha^{*}\bigr)-\frac n\zeta\;=\;\frac{(\zeta-1)n}{2\zeta}\;>\;0 .
\end{equation}
Set $k:=\ceil{\gamma\sqrt\Delta}$ and $q:=\floor{n/k}\ge1$, and apply Proposition~\ref{prop:core} with $\ell=2$ and $t:=q$; here $t\le k$, since $q\le n/k<(8/\gamma)\sqrt\Delta\le\gamma\sqrt\Delta\le k$ because $\gamma>2\sqrt{10}$. Condition (A3) reads $\ceil{q/H}\le q$, which holds as soon as $H\ge1$; condition (A2) says $L\ge1$, that is, $\beta=b-q(q+1)\ge\Delta-1$, or equivalently
\[
q(q+1)\;\le\;b-\Delta+1 ;
\]
and (A1) then follows, since $q^2\le b-\Delta$ and $r<k\le\Delta$ give $r+tq=r+q^2\le\Delta+b-\Delta=b$. Now $q\le n/k\le n/(\gamma\sqrt\Delta)$, so $q(q+1)\le(1+o(1))n^2/(\gamma^2\Delta)$, and by \eqref{eq:bDelta} it suffices that
\[
\frac{n^2}{\gamma^2\Delta}\;\le\;\bigl(1-o(1)\bigr)\frac{(\zeta-1)n}{2\zeta},
\qquad\text{that is}\qquad
\gamma^2\;\ge\;\bigl(1+o(1)\bigr)\frac{2\zeta}{\zeta-1}\cdot\frac{n}{\Delta} .
\]
As $n<8\Delta$, this is implied by $\gamma\ge(1+o(1))\,4\sqrt{\zeta/(\zeta-1)}$, which holds by hypothesis. Proposition~\ref{prop:core} now gives an equitable $k$-coloring. \qedhere
\end{proof}

\begin{proposition}\label{prop:threshsharp}
For every odd $a$ the complete bipartite graph $K_{a,a}$ satisfies $\Delta=a$, $n=2\Delta$, $\chie(K_{a,a})=2$ and $\chies(K_{a,a})=\Delta+1$.
\end{proposition}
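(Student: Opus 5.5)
The four claims are verified separately; everything except the last is immediate. $K_{a,a}$ is $a$-regular, so $\Delta=a$ and $n=2a=2\Delta$. It is not edgeless, so $\chie\ge2$. The two sides are independent sets of equal size $a$, so they form an equitable $2$-coloring, and $\chie=2$. For the threshold, Hajnal--Szemer\'edi~\cite{HajnalSzemeredi70} gives equitable $k$-colorings for every $k\ge\Delta+1$, so $\chies\le\Delta+1$. It remains to show that $K_{a,a}$ has no equitable $\Delta$-coloring, i.e.\ no equitable $a$-coloring.

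Suppose $k=a$. Then $n=2a=ka$ with $q=2$ and $r=0$, so every class would have size exactly $2$. Since the graph is complete bipartite, every independent set lies entirely within one side. Hence each side of size $a$ would have to be partitioned into classes of size $2$. This is impossible because $a$ is odd. So $K_{a,a}$ is not equitably $\Delta$-colorable, and $\chies\ge\Delta+1$.

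For $a=1$ the graph is $K_2$, where $\Delta+1=2$ and the claim holds trivially, so the parity argument is only needed for $a\ge3$. There is no genuine obstacle here. The only points needing care are the observation that independent sets of $K_{a,a}$ cannot meet both sides, and the computation $q=2$, $r=0$ at $k=\Delta$.
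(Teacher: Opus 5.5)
Your proof is correct and is essentially the paper's own argument. Both use the two sides as an equitable $2$-coloring, observe that at $k=a$ one has $q=2$ and $r=0$ so each side of odd size $a$ would have to be split into pairs, and invoke Hajnal--Szemer\'edi for $\chies\le\Delta+1$. Your separate treatment of $a=1$ is harmless but unnecessary, since the parity argument already covers that case.
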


\begin{proof}
Every color class lies inside one part, and the two parts form an equitable $2$-coloring, so $\chie=2$. At $k=a$ we have $n=2a=2k$, so $q=2$ and $r=0$: every class has exactly two vertices, and a part of odd size $a$ cannot be partitioned into pairs. Hence $K_{a,a}$ is not equitably $a$-colorable and $\chies>a=\Delta$, while $\chies\le\Delta+1$ by the Hajnal--Szemer\'edi theorem.
\end{proof}

\begin{remark}\label{rem:special2}
The reservation fraction $1/(2\ell)$ is optimal for this argument: writing $t=\floor{\theta k}$, the reservoir is $\beta\approx(\tfrac1\ell-\theta)n$, so a mixed class absorbs about $(\tfrac1\ell-\theta)n/\Delta$ vertices of the residues, while the residues total about $(\ell-1)n/k$; matching supply against demand requires $\gamma^2\theta(\tfrac1\ell-\theta)\ge\ell-1$ to leading order. The left side is maximized at $\theta=1/(2\ell)$, where it equals $\gamma^2/(4\ell^2)$ and returns the condition $\gamma>2\ell\sqrt{\ell-1}$, the limiting case $\zeta\to\infty$ of Theorem~\ref{thm:upper}.
\end{remark}

\section{The lower bound}\label{sec:lower}

We prove Theorem~\ref{thm:lower}. The construction is probabilistic; we first recall the concentration bound we use. A family of random variables $Y_1,\dots,Y_N$ is \emph{negatively associated (NA)} if for every pair of disjoint sets $I,J\subseteq[N]$ and every pair of coordinatewise nondecreasing functions $f,g$,
\[
\E\bigl[f(Y_i,\,i\in I)\,g(Y_j,\,j\in J)\bigr]\;\le\;\E\bigl[f(Y_i,\,i\in I)\bigr]\,\E\bigl[g(Y_j,\,j\in J)\bigr].
\]
We use three standard facts from Joag-Dev and Proschan~\cite{JoagDevProschan83}: (a) a uniformly random permutation of a fixed real vector yields an NA family (permutation distributions are NA); (b) the union of independent NA families is NA; (c) coordinatewise nondecreasing functions of pairwise disjoint subfamilies of an NA family form an NA family. Moreover, the Chernoff--Hoeffding upper-tail bound applies verbatim to sums of NA indicator variables: the standard proof bounds $\E e^{sX}$ by $\prod_v\E e^{sY_v}$ for $s>0$, and this factorization is exactly what negative association supplies, since $y\mapsto e^{sy}$ is nondecreasing (Dubhashi and Ranjan~\cite{DubhashiRanjan98}). Thus if $X=Y_1+\dots+Y_N$ with $(Y_v)$ NA indicators and $\mu\ge\E X$, then for all $x>\mu$,
\begin{equation}\label{eq:chernoff}
\Prb{X\ge x}\;\le\;\Bigl(\frac{e\mu}{x}\Bigr)^{x}.
\end{equation}

\begin{proof}[Proof of Theorem~\textup{\ref{thm:lower}}]
Fix $\ell\ge2$ and $\zeta>1$, and let $\Delta$ be large in terms of $\ell$ and $\zeta$. Set
\begin{equation}\label{eq:lbparams}
s_0:=\ceil*{2\zeta\ln\Delta},\quad K:=\floor*{\tfrac12\sqrt{\tfrac{(\ell-1)\Delta}{\ln\Delta}}},\quad
a:=\ceil*{\frac{K\,s_0}{\ell-1}},\quad b:=\ceil*{\zeta\Delta},\quad n:=(\ell-1)a+b,
\end{equation}
and $x_0:=\ceil*{\sqrt\Delta}$, $\Delta':=\Delta-(\ell-2)a$. Let $V_1,\dots,V_{\ell-1}$ be disjoint sets of size $a$ and let $B$ be a disjoint set of size $b$. Build $G$ in two steps (Figure~\ref{fig:lower}): join every pair of vertices lying in \emph{different} sets $V_i,V_j$ $(i\ne j<\ell)$, and let each $u\in\bigcup_{j<\ell}V_j$ choose its neighborhood in $B$ independently and uniformly among the $\Delta'$-subsets of $B$.

From \eqref{eq:lbparams}, $(\ell-1)a\le Ks_0+\ell\le\zeta\sqrt{(\ell-1)\Delta\ln\Delta}+K+\ell$, so $(\ell-1)a\le\Delta/10$ for large $\Delta$; hence $\Delta'\ge\tfrac9{10}\Delta$ and $\Delta'\le\Delta\le b$, and the construction is well defined. Every vertex of $V_j$ has degree exactly $(\ell-2)a+\Delta'=\Delta$ and every vertex of $B$ has degree at most $(\ell-1)a\le\Delta$, so $\Delta(G)=\Delta$ deterministically; $n\ge b\ge\zeta\Delta$; and $V_1,\dots,V_{\ell-1},B$ are independent sets, so $\chi(G)\le\ell$.

\begin{figure}[t]
\centering
\begin{tikzpicture}[xscale=1.1,yscale=0.9]
\draw[thick,fill=black!12] (0,0) rectangle (1.6,0.9);   \node at (0.8,0.45) {\small $V_1$};
\draw[thick,fill=black!12] (2.4,0) rectangle (4.0,0.9); \node at (3.2,0.45) {\small $V_2$};
\draw[thick,fill=black!12] (4.8,0) rectangle (6.4,0.9); \node at (5.6,0.45) {\small $V_{\ell-1}$};
\node at (4.4,0.45) {$\cdots$};
\draw[double,thick] (1.6,0.45) -- (2.4,0.45);
\draw[double,thick] (0.8,0.9) .. controls (3.2,1.8) .. (5.6,0.9);
\node at (3.2,1.72) {\small pairwise complete joins};
\draw[thick] (0,-2.3) rectangle (6.4,-1.4);
\node at (3.2,-1.85) {\small $B$, \ $b=\ceil{\zeta\Delta}$};
\draw[black!55] (0.5,0) -- (1.3,-1.4);
\draw[black!55] (0.9,0) -- (0.3,-1.4);
\draw[black!55] (3.0,0) -- (2.2,-1.4);
\draw[black!55] (3.4,0) -- (4.4,-1.4);
\draw[black!55] (5.3,0) -- (4.9,-1.4);
\draw[black!55] (5.9,0) -- (6.1,-1.4);
\node[align=left,anchor=west] at (6.7,-0.4) {\small each vertex of each $V_j$ chooses\\ \small a uniform $\Delta'$-subset of $B$};
\end{tikzpicture}
\caption{The construction of Theorem~\ref{thm:lower}. The parts $V_1,\dots,V_{\ell-1}$, each of size $a$, are pairwise completely joined, so no color class meets two of them; each of their vertices chooses its neighborhood in $B$ uniformly and independently among the $\Delta'$-subsets. Every vertex of $\bigcup_{j<\ell}V_j$ has degree exactly $\Delta$, and with high probability every set of $s_0$ vertices of one part dominates all but at most $x_0$ vertices of $B$.}
\label{fig:lower}
\end{figure}

The argument hinges on two features. The sets $V_j$ are pairwise completely joined, so \emph{every color class meets at most one of them}. And each $V_j$ is as large as possible: the obstruction below counts classes as $|V_j|$ divided by the number of vertices of $V_j$ that one class can hold, and the only ceiling on $a$ is that $V_j$ must not fill a class unaided, since $V_j$ is independent and a class contained in it evades the obstruction. The choice \eqref{eq:lbparams} sits just below that ceiling, by \eqref{eq:gap} below.

Call $S\subseteq V_j$ \emph{thorough} if at most $x_0$ vertices of $B$ have no neighbor in $S$. We claim that with high probability
\begin{equation}\label{eq:eventD}
\text{for every }j<\ell,\text{ every }S\in\tbinom{V_j}{s_0}\text{ is thorough.}
\end{equation}
Fix such an $S$ and for $v\in B$ let $Y_v$ be the indicator that $v\notin N(S)$. Since $b=\ceil{\zeta\Delta}\le\tfrac{11}{10}\zeta\Delta$ for $\Delta$ large and $\Delta'\ge\tfrac9{10}\Delta$, each $u\in S$ misses a fixed $v$ with probability $1-\Delta'/b\le e^{-\Delta'/b}\le e^{-9/(11\zeta)}$, independently over $u\in S$; hence, using $s_0\ge2\zeta\ln\Delta$,
\[
\E\Bigl[\sum_{v\in B}Y_v\Bigr]\;\le\;b\,e^{-9s_0/(11\zeta)}\;\le\;\tfrac{11}{10}\zeta\Delta\cdot\Delta^{-18/11}\;\le\;2\zeta\;=:\;\mu
\]
for $\Delta$ large. We verify that $(Y_v)_{v\in B}$ is NA, since the variables are neither independent nor obtained from a single permutation. Fix $u\in S$. The vector $\bigl(\mathbf 1_{v\notin N(u)}\bigr)_{v\in B}$ is obtained by placing $b-\Delta'$ ones and $\Delta'$ zeros in a uniformly random order, so it is a permutation distribution and is NA by (a). These $|S|$ vectors, one for each $u\in S$, are independent of one another because the neighborhoods are chosen independently, so by (b) the whole family $\bigl(\mathbf 1_{v\notin N(u)}\bigr)_{u\in S,\,v\in B}$ is NA. Index that family by the $|S|\times|B|$ pairs $(u,v)$ and group it by the second coordinate: the blocks $\{(u,v):u\in S\}$, one for each $v\in B$, are pairwise disjoint, and $Y_v=\prod_{u\in S}\mathbf 1_{v\notin N(u)}$ is a nondecreasing function of the block belonging to $v$. Hence $(Y_v)_{v\in B}$ is NA by (c). Fact (c) is essential here: the $Y_v$ are dependent, since a vertex of $B$ that escapes one neighborhood makes it likelier that another does. By \eqref{eq:chernoff}, applicable as $x_0=\ceil{\sqrt\Delta}>2\zeta=\mu$ for $\Delta$ large,
\[
\Prb{\textstyle\sum_v Y_v\ge x_0}\;\le\;\Bigl(\frac{e\mu}{x_0}\Bigr)^{x_0}
\;\le\;\Bigl(\frac{2e\zeta}{\sqrt\Delta}\Bigr)^{x_0}
\;=\;\exp\Bigl(-x_0\ln\frac{\sqrt\Delta}{2e\zeta}\Bigr)
\;\le\;\exp\Bigl(-\tfrac13\sqrt\Delta\,\ln\Delta\Bigr),
\]
using $\ln\bigl(\sqrt\Delta/(2e\zeta)\bigr)\ge\tfrac13\ln\Delta$ for $\Delta$ large in terms of $\zeta$. The number of pairs $(j,S)$ is at most $(\ell-1)a^{s_0}\le\exp\bigl(\ln\ell+s_0\ln a\bigr)\le\exp\bigl(4\zeta\ln^2\Delta\bigr)$ for large $\Delta$, since $s_0\le2\zeta\ln\Delta+1$ and $a\le\Delta$. A union bound gives failure probability at most
\[
\exp\Bigl(4\zeta\ln^2\Delta-\tfrac13\sqrt\Delta\,\ln\Delta\Bigr)\;\longrightarrow\;0 .
\]
The cost of the enlarged $V_j$ is absorbed here: the union bound runs over $\binom{a}{s_0}$ sets rather than $\binom{\sqrt\Delta}{s_0}$, but the slack $x_0$ permitted by \eqref{eq:gap} grows from $\Theta(\ln^2\Delta)$ to $\Theta(\sqrt\Delta)$, and the Chernoff exponent grows faster than the logarithm of the number of sets. This proves \eqref{eq:eventD}; fix any outcome $G$ satisfying it.

Suppose now, for contradiction, that $G$ admits an equitable $k$-coloring for some $k\le K$. For every such $k$,
\begin{equation}\label{eq:qbound}
\floor*{\frac nk}\;\ge\;\frac nK-1\;\ge\;\frac{\zeta\Delta}{\tfrac12\sqrt{(\ell-1)\Delta/\ln\Delta}}-1\;=\;2\zeta\sqrt{\frac{\Delta\ln\Delta}{\ell-1}}-1 ,
\end{equation}
whereas $a\le\tfrac{Ks_0}{\ell-1}+1\le\bigl(\zeta+o(1)\bigr)\sqrt{\Delta\ln\Delta/(\ell-1)}$. Hence for large $\Delta$
\begin{equation}\label{eq:gap}
a+x_0\;\le\;\bigl(\zeta+o(1)\bigr)\sqrt{\frac{\Delta\ln\Delta}{\ell-1}}+\sqrt\Delta+1\;<\;2\zeta\sqrt{\frac{\Delta\ln\Delta}{\ell-1}}-1\;\le\;\floor*{\frac nk} ,
\end{equation}
using \eqref{eq:qbound} and $\ell-1=o(\ln\Delta)$, the latter because $\ell$ is fixed.

Let $C$ be a color class. Since $V_i$ and $V_j$ are completely joined for $i\ne j$ and $C$ is independent, $C$ meets at most one of $V_1,\dots,V_{\ell-1}$; say $C\subseteq V_j\cup B$. If $|C\cap V_j|\ge s_0$, choose $S\subseteq C\cap V_j$ with $|S|=s_0$. Every vertex of $C\cap B$ is nonadjacent to all of $S$, so $|C\cap B|\le x_0$ by \eqref{eq:eventD}, whence $|C|\le a+x_0$; but equitability forces $|C|\ge\floor{n/k}$, contradicting \eqref{eq:gap}. Hence every class contains fewer than $s_0$ vertices of $V_1\cup\dots\cup V_{\ell-1}$, and as these number $(\ell-1)a$,
\[
k\;>\;\frac{(\ell-1)a}{s_0}\;\ge\;\frac{(\ell-1)}{s_0}\cdot\frac{Ks_0}{\ell-1}\;=\;K ,
\]
contradicting $k\le K$. Hence $\chie(G)>K$, and for $\Delta$ large,
\[
\chie(G)\;>\;\floor*{\tfrac12\sqrt{(\ell-1)\Delta/\ln\Delta}}\;\ge\;\frac{1}{3}\sqrt{\frac{(\ell-1)\Delta}{\ln\Delta}} . \qedhere
\]
\end{proof}

\begin{remark}\label{rem:truth}
The choice of $a$ in \eqref{eq:lbparams} extracts from the obstruction all it gives, making the heuristic of Section~\ref{sec:intro} exact; taking $a=\Theta(\sqrt\Delta)$ instead, small enough that $a/s_0=\Theta(\sqrt\Delta/(\zeta\ln\Delta))$, would lose a factor $\Theta(\zeta\sqrt{\ln\Delta})$.

What the obstruction cannot do is close the remaining gap: it is driven by classes each absorbing $\Theta(\zeta\ln\Delta)$ vertices of $A$, and a typical set of $c\,\zeta\ln\Delta$ vertices of $A$, for small $c$, leaves about $\zeta\Delta^{1-c(1-o(1))}$ vertices of $B$ undominated, so such classes are not excluded. For $\zeta>\ell$ the factor $\sqrt{\ln\Delta}$ separating Theorem~\ref{thm:upper} from Theorem~\ref{thm:lower} is removed by Theorem~\ref{thm:multi}; whether it can also be removed for the threshold $\chies$, and for $\chie$ in the little-room regime of Conjecture~\ref{con:smallzeta}, remains open (Section~\ref{sec:concl}).
\end{remark}

\section{Anchors chosen adaptively}\label{sec:multi}

Theorem~\ref{thm:upper} cannot be pushed below $\sqrt\Delta$, and the reason is visible in one line of the proof of Lemma~\ref{lem:anchor}: Phase~2 estimates $|N(R_i)|\le\rho_i\Delta$, an equality only when the anchors of a class share no neighbors at all. Under that estimate a mixed class absorbs only $O(\zeta)$ vertices of the residue, $t=\Theta(k)$ classes absorb $O(\zeta k)$, the demand is $\Theta(n/k)$, and balancing returns $k=\Theta(\sqrt\Delta)$. The anchors, however, may be chosen freely, and in \emph{any} graph a well-chosen set of $\rho$ anchors dominates far fewer than $\rho\Delta$ vertices. This section makes that precise and exploits it: Theorems~\ref{thm:multi} and~\ref{thm:uniform} are proved by rerunning the proof of Lemma~\ref{lem:anchor} with Phase~2 replaced and condition (P2) dropped, Phases~1 and~3 and Lemma~\ref{lem:multinormal} being unchanged.

\begin{lemma}[Averaging anchor lemma]\label{lem:avg}
Let $A'$ and $B'$ be disjoint nonempty vertex sets in a graph, with every vertex of $A'$ having at most $\Delta$ neighbors in $B'$, and put $p:=|A'|$. Then for every integer $\rho$ with $1\le\rho\le p$ there is a set $R\subseteq A'$ with $|R|=\rho$ and
\[
\bigl|B'\setminus N(R)\bigr|\;\ge\;|B'|\Bigl(1-\frac{\tau\Delta}{|B'|}\Bigr)_{\!+}^{\rho},
\qquad\text{where}\quad \tau:=\frac{p}{p-\rho+1}\;\ge\;1 .
\]
\end{lemma}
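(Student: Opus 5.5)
We prove this by choosing $R$ greedily, one vertex at a time, by averaging. Build $R_0=\emptyset\subset R_1\subset\dots\subset R_\rho$, and write $U_i:=B'\setminus N(R_i)$ for the set of vertices of $B'$ still undominated after $i$ steps, with $U_0=B'$. At step $i+1$ (for $0\le i<\rho$) there are $p-i$ unused vertices of $A'$ to choose from. Each vertex $v\in U_i$ is adjacent to at most $\Delta$ vertices of $A'$, so double counting gives
\[
\sum_{w\in A'\setminus R_i}|N(w)\cap U_i|\;\le\;\Delta|U_i| .
\]
Some unused $w$ therefore kills at most $\Delta|U_i|/(p-i)$ vertices of $U_i$. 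We add that $w$, which gives
\[
|U_{i+1}|\;\ge\;|U_i|\Bigl(1-\frac{\Delta}{p-i}\Bigr).
\]

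One point needs care: the double count uses the degree bound from the $B'$ side, that each vertex of $B'$ has at most $\Delta$ neighbours in $A'$. The hypothesis as stated bounds degrees from the $A'$ side instead. Read literally, the ambient maximum degree is $\Delta$, so every vertex of $B'$ also has at most $\Delta$ neighbours in $A'$, and that suffices. If only the $A'$-side bound were available, the trivial per-vertex bound $|N(w)\cap U_i|\le\Delta$ gives a subtractive estimate rather than a multiplicative one. The multiplicative form $(1-\tau\Delta/|B'|)^\rho$, with $|B'|$ in the denominator, suggests the intended averaging is a different one, so the next paragraph pursues that.

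The $|B'|$ in the denominator points to averaging over all $\rho$-subsets simultaneously, by a probabilistic or first-moment argument. Take $R$ uniform among the $\rho$-subsets of $A'$. Then
\[
\E\bigl|B'\setminus N(R)\bigr| \;=\; \sum_{v\in B'}\binom{p-d_v}{\rho}\Big/\binom{p}{\rho},
\]
where $d_v:=|N(v)\cap A'|$ and $\sum_v d_v\le p\Delta$ by the $A'$-side hypothesis. Two steps then finish the argument.

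First, bound each term from below by a product:
\[
\binom{p-d}{\rho}\Big/\binom{p}{\rho}\;=\;\prod_{i=0}^{\rho-1}\Bigl(1-\frac{d}{p-i}\Bigr)\;\ge\;\Bigl(1-\frac{d}{p-\rho+1}\Bigr)_+^{\rho}.
\]

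Second, apply Jensen's inequality. The function $x\mapsto(1-x)_+^\rho$ is convex, so averaging over $v$ gives at least
\[
|B'|\Bigl(1-\frac{\bar d}{p-\rho+1}\Bigr)_+^{\rho},
\qquad \bar d\le\frac{p\Delta}{|B'|}.
\]
Since $\bar d/(p-\rho+1)\le\tau\Delta/|B'|$, this is exactly the stated bound, and some $R$ achieves at least the expectation. This is the route we commit to. It uses only the $A'$-side hypothesis, and it explains both $\tau=p/(p-\rho+1)$ and the positive part.

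The main obstacle is the convexity step. The lower bound $\prod_i(1-d/(p-i))\ge(1-d/(p-\rho+1))^\rho$ holds termwise only while $d\le p-\rho+1$. For larger $d$, the left side is $0$ once $d>p-\rho$, and the right side is $0$ by the positive part, so the inequality holds for all $d$. After that, Jensen applies cleanly to the convex nonincreasing function $g(d)=(1-d/(p-\rho+1))_+^\rho$, and monotonicity lets us replace $\bar d$ by its upper bound.
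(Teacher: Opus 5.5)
Your committed argument is correct and is essentially the paper's proof: a uniformly random $\rho$-subset $R$, the bound $\binom{p-d}{\rho}/\binom{p}{\rho}\ge(1-d/(p-\rho+1))_+^{\rho}$ (using $p-i\ge p-\rho+1$, with the case $d>p-\rho$ handled separately), Jensen for this convex nonincreasing function, and $\sum_v d_v\le p\Delta$. The greedy detour in your opening paragraphs is unnecessary, and its bound on how many neighbours in $A'$ a vertex of $B'$ has is not a hypothesis of the lemma, so you were right to drop it.
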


\begin{proof}
Let $R$ be a uniformly random $\rho$-element subset of $A'$ and put $p':=p-\rho+1$. For $v\in B'$ let $d_v$ be its number of neighbors in $A'$, and set
\[
\varphi(x)\;:=\;\biggl(\frac{(p'-x)_+}{p'}\biggr)^{\rho},\qquad x\ge0 ,
\]
which is convex and nonincreasing, being the composition of the convex nonincreasing map $x\mapsto(p'-x)_+$ with the convex nondecreasing map $u\mapsto(u/p')^{\rho}$ on $[0,\infty)$. We claim $\Prb{v\notin N(R)}\ge\varphi(d_v)$. Indeed $\Prb{v\notin N(R)}=\binom{p-d_v}{\rho}\big/\binom{p}{\rho}$, which vanishes when $d_v>p-\rho$, as does $\varphi(d_v)$ because then $d_v\ge p'$; and when $d_v\le p-\rho$,
\[
\Prb{v\notin N(R)}\;=\;\prod_{i=0}^{\rho-1}\frac{p-d_v-i}{p-i}\;\ge\;\Bigl(\frac{p'-d_v}{p'}\Bigr)^{\rho}
\]
since $x\mapsto1-d_v/x$ is nondecreasing and $p-i\ge p'$ for $0\le i\le\rho-1$. By Jensen's inequality,
\[
\E\,\bigl|B'\setminus N(R)\bigr|\;=\;\sum_{v\in B'}\Prb{v\notin N(R)}\;\ge\;|B'|\,\varphi(\bar d),
\qquad \bar d:=\frac{1}{|B'|}\sum_{v\in B'}d_v .
\]
The edges counted by $\sum_v d_v$ all leave $A'$, so $\sum_{v\in B'}d_v\le p\Delta$ and $\bar d\le p\Delta/|B'|$; as $\varphi$ is nonincreasing, $\varphi(\bar d)\ge\bigl(1-p\Delta/(p'|B'|)\bigr)_+^{\rho}=\bigl(1-\tau\Delta/|B'|\bigr)_+^{\rho}$. Some outcome attains at least the mean.
\end{proof}

Sampling without replacement makes $|R|=\rho$ exactly, which the counting in the proof of Theorem~\ref{thm:multi} needs: each anchor set must remove a prescribed number of vertices from the residue. The price is the factor $\tau=1+O(\rho/p)$.

\begin{lemma}[Derandomization]\label{lem:derand}
The set $R$ of Lemma~\textup{\ref{lem:avg}} can be found deterministically in time $O\bigl(\rho\,(|A'|\Delta+\rho|B'|)\bigr)$.
\end{lemma}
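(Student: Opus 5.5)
We derandomize the proof of Lemma~\ref{lem:avg} by the method of conditional expectations, choosing the $\rho$ anchors one at a time. The natural potential is the conditional expectation of $|B'\setminus N(R)|$, but evaluating it needs hypergeometric ratios that are awkward to update. Instead we use a pessimistic estimator built directly from the lower bound in the proof. Suppose anchors $x_1,\dots,x_i$ have been fixed, forming a set $R_i$ with $|R_i|=i$. Let $P_i:=A'\setminus R_i$, so $|P_i|=p-i$, and let $U_i:=B'\setminus N(R_i)$ be the still-undominated vertices. For $v\in U_i$ write $d_v^{(i)}$ for the number of neighbours of $v$ in $P_i$. Define
\[
\Phi_i\;:=\;\sum_{v\in U_i}\prod_{m=i}^{\rho-1}\Bigl(1-\frac{d_v^{(i)}}{p-m}\Bigr)_{+},
\]
with the empty product equal to $1$. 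This is the same product lower bound as before, now restricted to the remaining $\rho-i$ draws from the remaining $p-i$ vertices.

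The proof has three steps.

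\begin{enumerate}
\item \emph{Initial value.} We show $\Phi_0\ge|B'|(1-\tau\Delta/|B'|)_+^{\rho}$. The proof of Lemma~\ref{lem:avg} bounds each factor $(1-d_v/(p-m))_+$ below by $(p'-d_v)_+/p'$, since $p-m\ge p'$. It then applies Jensen's inequality to the convex function $\varphi$ and the edge count $\sum_v d_v\le p\Delta$. So this step is a restatement of that argument.

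\item \emph{No loss at each step.} We show that some $x\in P_i$ gives $\Phi_{i+1}\ge\Phi_i$. We prove that the average of $\Phi_{i+1}(x)$ over a uniform $x\in P_i$ is at least $\Phi_i$. Fix $v\in U_i$ with $d:=d_v^{(i)}$. With probability $d/(p-i)$ the chosen $x$ is a neighbour of $v$ and $v$ drops out. Otherwise $v$ survives, its degree stays $d$, and the remaining product runs over $m=i+1,\dots,\rho-1$. So the expected contribution of $v$ is
\[
\Bigl(1-\frac{d}{p-i}\Bigr)\prod_{m=i+1}^{\rho-1}\Bigl(1-\frac{d}{p-m}\Bigr)_{+},
\]
which is exactly its term in $\Phi_i$. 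When $d>p-i$ the claim is trivial. Hence the average equals $\Phi_i$, and some $x$ attains at least $\Phi_i$.

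\item \emph{Termination.} At $i=\rho$ the products are empty, so $\Phi_\rho=|U_\rho|=|B'\setminus N(R)|$. Combining the steps gives the bound of Lemma~\ref{lem:avg}.
\end{enumerate}

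The main obstacle is the running time. Recomputing every candidate's $\Phi_{i+1}(x)$ from scratch is too slow. Write $w_v^{(i)}$ for the weight of $v$ in $\Phi_i$. Choosing $x$ changes $\Phi$ in two ways. First, every $v\in U_i$ loses its first factor, since the product now starts at $m=i+1$. Second, the neighbours of $x$ leave $U$, and the degrees of the survivors are unchanged because $x$ is not adjacent to them. So
\[
\Phi_{i+1}(x)\;=\;\sum_{v\in U_i\setminus N(x)}w_v^{(i+1)},
\]
where $w_v^{(i+1)}$ depends only on $i$ and $d_v^{(i)}$. Maximizing $\Phi_{i+1}(x)$ therefore amounts to minimizing $\sum_{v\in N(x)\cap U_i}w_v^{(i+1)}$ over $x\in P_i$.

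Each round then costs the following:
\begin{itemize}
\item $O(\rho|B'|)$ to compute all the weights $w_v^{(i+1)}$, each a product of at most $\rho$ factors;
\item $O(|A'|\Delta)$ to scan the adjacency lists of all candidates $x$ and form their penalty sums;
\item $O(\Delta)$ to remove $N(x)$ from $U$, using a marker array on $B'$.
\end{itemize}
Over $\rho$ rounds this totals $O(\rho(|A'|\Delta+\rho|B'|))$, as claimed. The one point to check carefully is that the $d_v^{(i)}$ need no update for surviving $v$; this holds because the removed anchor $x$ is adjacent only to vertices that leave $U$.
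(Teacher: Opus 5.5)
Your proof is correct and is essentially the paper's. The paper also uses the method of conditional expectations with anchors chosen one at a time, observes that surviving vertices keep their degrees, and maximizes $\Phi_{i+1}$ by minimizing $\sum_{v\in N(x)\cap U_i}w_v$, at a cost of $O(\rho|B'|+|A'|\Delta)$ per step. Note, though, that your ``pessimistic estimator'' is exact: since $d_v^{(i)}\le p-i$, the product $\prod_{m=i}^{\rho-1}\bigl(1-d/(p-m)\bigr)_+$ equals $\binom{p-i-d}{\rho-i}\big/\binom{p-i}{\rho-i}$, which is precisely the paper's conditional-expectation potential (this is why your step~2 gives equality). The only detail you omit is the paper's one-time restriction of the adjacency lists of $A'$ to $B'$, which is what makes each candidate evaluation cost $O(\Delta)$.
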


\begin{proof}
Apply the method of conditional expectations, choosing the elements $u_1,\dots,u_\rho$ of $R$ one at a time, $u_{i+1}$ being taken from $A_i:=A'\setminus\{u_1,\dots,u_i\}$. Write $B_i:=B'\setminus N(\{u_1,\dots,u_i\})$ and $d^{(i)}_v:=|N(v)\cap A_i|$ for $v\in B_i$, and put
\[
\Phi_i(u_1,\dots,u_i)\;:=\;\sum_{v\in B_i}\binom{|A_i|-d^{(i)}_v}{\rho-i}\Big/\binom{|A_i|}{\rho-i} .
\]
If $\{u_{i+1},\dots,u_\rho\}$ is a uniformly random $(\rho-i)$-subset of $A_i$, then $\Phi_i$ is exactly the conditional expectation of $|B'\setminus N(R)|$ given the first $i$ choices; in particular $\Phi_0=\E\,|B'\setminus N(R)|$ and $\Phi_\rho=|B'\setminus N(R)|$, and
\[
\Phi_i(u_1,\dots,u_i)\;=\;\frac{1}{|A_i|}\sum_{u\in A_i}\Phi_{i+1}(u_1,\dots,u_i,u).
\]
Hence some $u\in A_i$ satisfies $\Phi_{i+1}\ge\Phi_i$; choosing such a $u$ at each step gives $\Phi_\rho\ge\Phi_0$, which is the conclusion of Lemma~\ref{lem:avg}.

For the running time, note that $d^{(i+1)}_v=d^{(i)}_v$ for every $v\in B_i$ with $u\notin N(v)$. Hence, with
$w_v:=\binom{|A_i|-1-d^{(i)}_v}{\rho-i-1}\big/\binom{|A_i|-1}{\rho-i-1}$, which does not depend on $u$, and $T:=\sum_{v\in B_i}w_v$,
\[
\Phi_{i+1}(u_1,\dots,u_i,u)\;=\;\sum_{v\in B_i\setminus N(u)}w_v\;=\;T-\sum_{v\in N(u)\cap B_i}w_v ,
\]
so maximizing $\Phi_{i+1}$ over $u\in A_i$ amounts to minimizing $\sum_{v\in N(u)\cap B_i}w_v$. Each $w_v$ is a product of at most $\rho$ factors, so computing all the weights and $T$ costs $O(\rho|B'|)$ per step, and evaluating one candidate costs $O(\Delta)$ once each adjacency list has been restricted to $B'$, a preprocessing pass of cost $O\bigl(\sum_{u\in A'}\deg(u)\bigr)$, absorbed, in every application in this paper, by $O(|A'|\Delta)$; so a step costs $O(\rho|B'|+|A'|\Delta)$. There are $\rho$ steps.
\end{proof}

\begin{remark}\label{rem:input}
Theorems~\ref{thm:multi} and~\ref{thm:uniform} are existence statements and use only that a proper $\ell$-coloring exists. Their proofs are constructive \emph{given} such a coloring; since computing $\chi(G)$ is \textup{NP}-hard~\cite{Karp72}, any algorithmic reading of them takes the $\ell$-coloring as part of the input. For $\ell=2$ this is no restriction (Section~\ref{sec:alg}), which is one reason the bipartite case is stated separately.
\end{remark}

\begin{remark}\label{rem:cost}
Summed over the whole construction, the applications of Lemma~\ref{lem:derand} cost $O(M^2\Delta+\rho Mn)$, where $M=\sum_jM_j$ is the total residue: each class performs $\rho$ steps at cost $O(|A'|\Delta+\rho|B'|)\le O(M\Delta+\rho n)$, and there are $O(M/\rho+\ell\ln^2k)$ of them. Since $M\le(\ell-1)q=O(\ell n/k)$ and one may always take $\rho\le M$, this is $O(\ell^2n^2\Delta/k^2+\ell^2n^3/k^2)$, so the coloring of Theorem~\ref{thm:multi} is computable deterministically in time $O(n+m+\ell^2n^3\ln\Delta/\Delta)$. This is polynomial but not linear; obtaining the sharp bound of Theorem~\ref{thm:multi} in linear time remains open, and is the reason Theorem~\ref{thm:upper}, whose anchors are prescribed, is retained for Theorem~\ref{thm:alg}.
\end{remark}

The improvement is quantitative. The crude bound $|B'|-\rho\Delta$ is vacuous once $\rho\ge|B'|/\Delta$, whereas the conclusion of Lemma~\ref{lem:avg} is at least $q$ for every
\begin{equation}\label{eq:rhochoice}
\rho\;\le\;\ln\frac{|B'|}{q}\Bigl/\ln\frac{|B'|}{|B'|-\tau\Delta}\Bigr. ,
\end{equation}
and the right-hand side is $\bigl(1+o(1)\bigr)\frac{|B'|}{\tau\Delta}\ln\frac{|B'|}{q}$ when $|B'|/\Delta\to\infty$. With $|B'|=\Theta(n)$, $q=\Theta(n/k)$ and $\tau=1+o(1)$ this permits $\rho=\Theta(\zeta\ln k)$ anchors per class in place of $\Theta(\zeta)$, which is exactly the logarithmic factor at stake.

Two properties of \eqref{eq:rhochoice} are used below. It is the exact consequence of Lemma~\ref{lem:avg}, not the weaker form obtained from $1-x\ge e^{-2x}$; the latter halves $\rho$, costs a factor $\sqrt2$ in $\gamma$, and doubles the hypothesis on the order, which is why we do not use it. And the factor $\tau$ is harmless while the residue is large compared with $\rho$, but not on the last few vertices of a residue; there we revert to the crude estimate $|N(R)|\le|R|\Delta$, at a cost of $O(\ln^2k)$ extra classes per part and nothing else. Anchors are therefore chosen adaptively rather than prescribed, in the manner described at the opening of this section.

\begin{proof}[Proof of Theorem~\textup{\ref{thm:multi}}]
Fix a proper $\ell$-coloring with $V_1,\dots,V_\ell$, $B$, $b$ and $a_j$ as in the Notation, and set $k:=\ceil{\gamma\sqrt{\Delta/\ln\Delta}}$, which determines $q$ and $r$. Since $\gamma>\Gamma(\ell,\zeta)$, we may fix a reservation parameter $w$ and a slack $\eta$ with
\begin{equation}\label{eq:eta}
\frac1\zeta<w<\frac1\ell,\qquad 0<\eta<\tfrac12,\qquad
U_0:=(1-\eta)^2w\zeta>1,
\end{equation}
\begin{equation}\label{eq:etab}
(1-\eta)^4\gamma^2\,w\Bigl(\frac1\ell-w\Bigr)\;>\;2(\ell-1)\,h(U_0),
\end{equation}
and we then put $\theta:=\tfrac1\ell-w$, $t:=\floor{\theta k}$ and $\delta:=1-1/U_0>0$. Such a pair $(w,\eta)$ exists: as $\eta\downarrow0$ condition \eqref{eq:etab} becomes $\gamma^2>2(\ell-1)h(w\zeta)\big/\bigl(w(\tfrac1\ell-w)\bigr)$, and by the definition \eqref{eq:Gamma} of $\Gamma$ some admissible $w$ satisfies it whenever $\gamma>\Gamma(\ell,\zeta)$.

Writing $\omega:=\ceil{\ln k}$, fix $\Delta_0$ so large that for every $\Delta\ge\Delta_0$:
\begin{enumerate}
\item[(U0)] $q\ge1$,\quad $k\le\eta wn$,\quad $\omega\ge1/\eta$,\quad $(1-\eta)wk\ge4/\delta$,\quad and\quad $\bigl(1+\tfrac2\delta\bigr)(\ell-1)(1+\ln k)^2\le\eta t$;
\item[(U1)] $k^2\,\ln\bigl((1-\eta)wk\bigr)\;\ge\;\dfrac{(\ell-1)\,h(U_0)}{(1-\eta)^4\,w\,\theta}\,\Delta$.
\end{enumerate}
Writing $k=\gamma\sqrt{\Delta/\ln\Delta}$ we have $\ln k=(\tfrac12+o(1))\ln\Delta$, so the left side of (U1) is $(\tfrac{\gamma^2}{2}+o(1))\Delta$, and (U1) holds for all large $\Delta$ by \eqref{eq:etab}. The clauses of (U0) hold for large $\Delta$ because $k=\Theta(\sqrt{\Delta/\ln\Delta})$, $t\ge\theta k-1$ and $n\ge\zeta\Delta$. Below, $\Delta_0$ is enlarged, finitely often, so that the estimates flagged ``for large $\Delta$'' hold as well; each involves only $\ell$, $\zeta$, $\gamma$, $w$ and $\eta$.

\emph{Splitting the parts.} Since $b\ge n/\ell$, $tq\le\theta k\cdot\tfrac nk=\theta n$ and $r<k\le\eta wn\le wn$, we get $r+tq\le(\theta+w)n=\tfrac n\ell\le b$, so \eqref{eq:Hcond} holds and Lemma~\ref{lem:multinormal} supplies $u_j,v_j,M_j$ and $y\ge0$ with $M_j\le q$, $\sum_j(u_j+v_j)+t+y=k$ and $0\le r-\sum_ju_j\le t+y$. Give part $j$ its $u_j$ classes of size $q+1$ and $v_j$ of size $q$, all inside $V_j$; what remains of $V_j$ is a residue of $M_j\le q$ vertices. Exactly $r-\sum_ju_j$ of the $t+y$ classes still to be built have size $q+1$; assign those sizes arbitrarily.

\emph{The reservoir.} Write $\beta:=b-t(q+1)$. As long as at most $t$ of the remaining classes have been built, at least $\beta$ vertices of $B$ are unused, since each class takes at most $q+1$ of them. Because $b\ge n/\ell$, $t(q+1)\le\theta n+\theta k$ and $k\le\eta wn$,
\begin{equation}\label{eq:beta}
\beta\;\ge\;\frac n\ell-\theta n-\theta k\;\ge\;wn-\eta wn\;=\;(1-\eta)wn\;\ge\;(1-\eta)w\zeta\Delta\;=\;\frac{U_0\,\Delta}{1-\eta}\;>\;\frac{\Delta}{1-\eta},
\end{equation}
using $\theta\le\tfrac1\ell\le1$ in the second step. Also $q\le n/k$ gives $\beta/q\ge(1-\eta)wk\ge4/\delta$ by (U0). Put
\begin{gather*}
U\;:=\;\frac{(1-\eta)\beta}{\Delta}\;\ge\;U_0,\qquad
\lambda\;:=\;\ln\frac{U}{U-1},\\
\rho\;:=\;\min\Bigl\{\Bigl\lfloor\frac1\lambda\ln\frac\beta q\Bigr\rfloor,\,q\Bigr\},
\qquad
\rho_0\;:=\;\min\Bigl\{\Bigl\lfloor\frac{\beta-q-1}{\Delta}\Bigr\rfloor,\,q\Bigr\} .
\end{gather*}
The caps at $q$ ensure that each anchor set fits inside its class, whose size is $q$ or $q+1$. Here $\rho\ge1$: the first entry of the minimum is at least $1$ because $\lambda\le\ln\frac{U_0}{U_0-1}$ is bounded while $\ln(\beta/q)\to\infty$, and $q\ge1$ by (U0). By \eqref{eq:beta}, $\Delta\le\beta/U_0$, so $\beta-\Delta\ge\delta\beta$; and $q+1\le\delta\beta/2$ by (U0), whence
\begin{equation}\label{eq:rho0}
\Bigl\lfloor\frac{\beta-q-1}{\Delta}\Bigr\rfloor\;\ge\;\frac{\beta-q-1-\Delta}{\Delta}\;\ge\;\frac{\delta\beta}{2\Delta},
\end{equation}
while $\frac{\beta-q-1}{\Delta}\ge\bigl(1-\tfrac\delta2\bigr)\frac\beta\Delta\ge\frac{(1-\delta/2)U_0}{1-\eta}=\frac{U_0+1}{2(1-\eta)}>1$ by \eqref{eq:beta} and $U_0>1$; together with $q\ge1$ this gives $\rho_0\ge1$. Since $\lambda\ge1/U$ we also have $\rho\le U\ln(\beta/q)\le(\beta/\Delta)\ln(\beta/q)$; if $\rho_0=q$ then $\rho/\rho_0\le1$, and otherwise \eqref{eq:rho0} applies, so in either case $\rho/\rho_0\le(2/\delta)\ln(\beta/q)$ for $\Delta$ large.

\emph{Building the mixed classes.} Process the parts in turn. For part $j$, let $A'$ be the as yet unused portion of its residue and $B'$ the set of unused vertices of $B$, so that $|B'|\ge\beta$ as long as at most $t$ classes have been built. While $A'\ne\emptyset$, choose an anchor set $R\subseteq A'$ as follows.
\begin{itemize}
\item If $|A'|\ge\omega\rho$, apply Lemma~\ref{lem:avg} to $A'$ and $B'$ with parameter $\rho$, obtaining $R$ with $|R|=\rho$. Its correction factor satisfies $\tau=|A'|/(|A'|-\rho+1)\le\omega/(\omega-1)\le1/(1-\eta)$ by (U0), so $\tau\Delta\le\Delta/(1-\eta)\le\beta/U_0<\beta$ and
\[
|B'\setminus N(R)|\;\ge\;|B'|\Bigl(1-\frac{\tau\Delta}{|B'|}\Bigr)^{\rho}\;\ge\;\beta\Bigl(1-\frac{\Delta}{(1-\eta)\beta}\Bigr)^{\rho}\;=\;\beta\Bigl(1-\frac1U\Bigr)^{\rho}\;=\;\beta\,e^{-\rho\lambda}\;\ge\;q ,
\]
the second step because $s\mapsto s(1-\tau\Delta/s)^{\rho}$ is nondecreasing for $s>\tau\Delta$ and $|B'|\ge\beta$, and the last by the definition of $\rho$.
\item If $|A'|<\omega\rho$, let $R$ consist of any $\min\{\rho_0,|A'|\}$ vertices of $A'$. Then, crudely, $|B'\setminus N(R)|\ge\beta-\rho_0\Delta\ge q+1$.
\end{itemize}
In either case $1\le|R|\le q\le s_i$, so the class has room for its anchors; take $R$ as the anchor set of the next class and complete it with $s_i-|R|\le q$ vertices of $B'\setminus N(R)$. The class is independent: $R$ lies inside the single part $V_j$, $B$ is independent, and no edge joins $R$ to $B'\setminus N(R)$. Delete the vertices used and repeat.

\emph{Counting the classes.} A step of the first kind removes exactly $\rho$ vertices of the residue, so there are at most $M_j/\rho$ of them; a step of the second kind removes $\min\{\rho_0,|A'|\}$ vertices from a residue of size less than $\omega\rho$, so there are at most $\omega\rho/\rho_0+1\le(2\omega/\delta)\ln(\beta/q)+1$ of them. Since $\beta\le n$ and $q\ge n/k-1$ we have $\ln(\beta/q)\le\ln k+1$ for large $\Delta$, so, using $\omega\le\ln k+1$, part $j$ consumes at most $M_j/\rho+(2/\delta)(1+\ln k)^2+1$ classes, and the total is at most
\[
\sum_{j<\ell}\Bigl(\frac{M_j}{\rho}+\frac2\delta(1+\ln k)^2+1\Bigr)
\;\le\;\frac{(\ell-1)q}{\rho}+\Bigl(1+\frac2\delta\Bigr)(\ell-1)(1+\ln k)^2
\;\le\;\frac{(\ell-1)q}{\rho}+\eta t
\]
by (U0) and $M_j\le q$. It therefore suffices that $(\ell-1)q/\rho\le(1-\eta)t$. If $\rho=q$ this reads $\ell-1\le(1-\eta)t$, which holds since $\ell-1\le\eta t\le(1-\eta)t$ by (U0) and $\eta<\tfrac12$. Otherwise $\rho=\bigl\lfloor\tfrac1\lambda\ln\tfrac\beta q\bigr\rfloor$; now $1/\lambda=U/h(U)$ and $h$ is nonincreasing with $U\ge U_0$, so by \eqref{eq:beta},
\[
\rho\;\ge\;\frac{U}{h(U)}\ln\frac\beta q-1\;\ge\;\frac{(1-\eta)\beta}{\Delta\,h(U_0)}\ln\bigl((1-\eta)wk\bigr)-1
\;\ge\;\frac{(1-\eta)^2wn}{\Delta\,h(U_0)}\ln\bigl((1-\eta)wk\bigr)-1 ,
\]
so, using $q\le n/k$ and absorbing the $-1$ for large $\Delta$,
\[
\frac{(\ell-1)q}{\rho}\;\le\;\bigl(1+o(1)\bigr)\,
\frac{(\ell-1)\,h(U_0)\,\Delta}{(1-\eta)^2\,w\,k\,\ln\bigl((1-\eta)wk\bigr)} ,
\]
in which $n$ has canceled. Since $t\ge\theta k-1$, the requirement $(\ell-1)q/\rho\le(1-\eta)t$ reduces to
\[
k^2\,\ln\bigl((1-\eta)wk\bigr)\;\ge\;\bigl(1+o(1)\bigr)\,\frac{(\ell-1)\,h(U_0)}{(1-\eta)^3\,w\,\theta}\,\Delta ,
\]
which is implied by (U1). At most $t$ mixed classes are used, so the bound $|B'|\ge\beta$ was valid throughout. Finally the $y$ remaining classes, together with any of the $t$ reserved ones left unused, are filled from the leftover vertices of $B$, which are independent and, by the computation in Phase~3 of Lemma~\ref{lem:anchor}, exactly the right number. The result is an equitable $k$-coloring.
\end{proof}

Tracking the dependence on $\ell$ in the proof just given yields Theorem~\ref{thm:uniform}.

\begin{proof}[Proof of Theorem~\textup{\ref{thm:uniform}}]
Apply the proof of Theorem~\ref{thm:multi} with
\[
\zeta:=3\ell,\qquad \gamma:=5\ell^{3/2},\qquad w:=\frac{3}{5\ell},\qquad \theta=\frac1\ell-w=\frac{2}{5\ell},\qquad \eta:=\frac1{20},
\]
so that $k=\ceil{5\ell^{3/2}\sqrt{\Delta/\ln\Delta}}$. All the quantities that enter are then independent of $\ell$ or scale with a fixed power of $\ell$, and we check that every condition holds uniformly.

For \eqref{eq:eta}: $1/\zeta=1/(3\ell)<w<1/\ell$, and
\[
U_0=(1-\eta)^2w\zeta=\tfrac{361}{400}\cdot\tfrac95=1.6245>1,
\]
independent of $\ell$; consequently $h(U_0)=1.5530\ldots$ and $\delta=1-1/U_0=0.3844\ldots$ are absolute constants. For \eqref{eq:etab},
\[
(1-\eta)^4\gamma^2w\theta=\tfrac{130321}{160000}\cdot25\ell^3\cdot\tfrac{6}{25\ell^2}=4.887\ldots\,\ell
\;>\;3.106\ldots(\ell-1)=2(\ell-1)h(U_0),
\]
again uniformly in $\ell$.

For (U0): since $\ell\le(\Delta/\ln\Delta)^{1/3}$ we have $k\le5\ell^{3/2}\sqrt{\Delta/\ln\Delta}+1\le6\Delta/\ln\Delta$, while $\eta wn\ge\tfrac1{20}\cdot\tfrac3{5\ell}\cdot3\ell\Delta=0.09\,\Delta$, so $k\le\eta wn$ once $\ln\Delta\ge67$. The clauses $q\ge1$ and $\omega\ge20$ hold for large $\Delta$, and $(1-\eta)wk\ge\tfrac{19}{20}\cdot\tfrac3{5\ell}\cdot5\ell^{3/2}\sqrt{\Delta/\ln\Delta}=2.85\sqrt\ell\sqrt{\Delta/\ln\Delta}\ge4/\delta$ for large $\Delta$. For the last clause it suffices, since $t\ge\theta k-1$, that $C\ell^2(1+\ln k)^2\le k$ for an absolute constant $C$; and $\ell^2\le\ell^{3/2}(\Delta/\ln\Delta)^{1/6}$ together with $k\ge5\ell^{3/2}\sqrt{\Delta/\ln\Delta}$ reduces this to $(1+\ln k)^2\le(5/C)(\Delta/\ln\Delta)^{1/3}$, which holds for large $\Delta$ because $\ln k=O(\ln\Delta)$, uniformly in $\ell$. This is the clause that binds, and it permits $\ell$ far beyond $(\Delta/\ln\Delta)^{1/3}$; that hypothesis is imposed for definiteness, and because the bound exceeds $\Delta$ not far above that threshold (Section~\ref{sec:intro}).

For (U1) only a lower bound on the logarithm is needed. Since $(1-\eta)wk\ge2.85\sqrt\ell\sqrt{\Delta/\ln\Delta}$, which is at least $2\sqrt{\Delta/\ln\Delta}$, we get $\ln((1-\eta)wk)\ge(\tfrac12-o(1))\ln\Delta$ with the $o(1)$ absolute, so
\[
k^2\ln\bigl((1-\eta)wk\bigr)\;\ge\;\bigl(1-o(1)\bigr)\,25\,\ell^3\,\frac{\Delta}{\ln\Delta}\cdot\frac{\ln\Delta}{2}\;=\;\bigl(\tfrac{25}2-o(1)\bigr)\ell^3\Delta ,
\]
whereas the right-hand side of (U1) equals $(\ell-1)h(U_0)\Delta\big/\bigl((1-\eta)^4w\theta\bigr)=7.9446\ldots\,\ell^2(\ell-1)\Delta\le7.945\,\ell^3\Delta$. As $\tfrac{25}2=12.5>7.945$, condition (U1) holds for all large $\Delta$, uniformly in $\ell$. The finitely many further ``for large $\Delta$'' absorptions in the proof of Theorem~\ref{thm:multi} involve only the absolute constants $U_0$, $h(U_0)$, $\delta$, $\eta$ and $w\zeta=\tfrac95$, together with the bounds $k\ge5\cdot2^{3/2}\sqrt{\Delta/\ln\Delta}$ and $k/n\le1/\ln\Delta$, and are therefore uniform in $\ell$ as well. A single $\Delta_1$ therefore works.
\end{proof}

\begin{remark}\label{rem:adaptive}
Two features of Theorem~\ref{thm:upper} must be reconsidered when the anchors are found rather than prescribed. The first is the linear-time implementation of Section~\ref{sec:alg}, since Lemma~\ref{lem:avg} guarantees only that a good anchor set \emph{exists}; sampling $O(k)$ candidate sets and keeping the best succeeds with high probability, by the reverse Markov inequality applied to $|B'\setminus N(R)|\le|B'|$, so Theorem~\ref{thm:multi} is realized by a randomized polynomial-time algorithm; Lemma~\ref{lem:derand} makes it deterministic, at the cost recorded in Remark~\ref{rem:cost}, and only the linear-time implementation is lost. The second is only partly retained. The construction of Theorem~\ref{thm:multi} applies unchanged, one $k$ at a time, to every $k$ with $k_1\le k\le\eta wn$: the binding condition \textup{(U1)} only improves as $k$ grows, and so, once $k_1$ is large, do the clauses of \textup{(U0)} other than $k\le\eta wn$. That last clause caps the range, so the full threshold form does not follow here as it did from Lemma~\ref{lem:mono}; for $\zeta>2\ell$ the range beyond $\eta wn$ is in any case covered by Theorem~\ref{thm:upper} and the Hajnal--Szemer\'edi theorem, at the cost of a factor $\sqrt{\ln\Delta}$ in the bound, while for $\ell<\zeta\le2\ell$ we make no claim about $\chies$. In the other direction, neither Theorem~\ref{thm:multi} nor Theorem~\ref{thm:upper} subsumes Theorem~\ref{thm:smallzeta}, whose range $\zeta>1$ includes bipartite graphs with $1<\zeta\le2$, where no other bound of this paper applies.
\end{remark}

\begin{remark}\label{rem:delta0}
The two constructions differ sharply in the size of $\Delta_0$. That of Theorem~\ref{thm:upper} is small and explicit: conditions (C0)--(C2) are polynomial inequalities in $k_1$ alone, and Appendix~\ref{app:exp} evaluates them. The $\Delta_0$ of Theorem~\ref{thm:multi} is not of that kind. Condition (U0) requires $\ceil{\ln k}\ge1/\eta$ and $k\le\eta wn$, and $\eta$ must be taken small when $\zeta$ or $\gamma$ is near critical, so $\Delta_0$ grows exponentially in $1/\eta$; for the parameters of Theorem~\ref{thm:uniform}, where $\eta=1/20$, the second clause alone requires $\ln\Delta\ge67$. We have made no attempt to optimize this, since the interest of Theorem~\ref{thm:multi} is the growth rate rather than the threshold; the small thresholds of Appendix~\ref{app:exp} do not transfer to the sharp bound.
\end{remark}

\begin{remark}\label{rem:sharp}
The hypothesis of bounded chromatic number cannot be dropped. The disjoint union of $K_{\Delta+1}$ with $(\zeta-1)\Delta$ isolated vertices has maximum degree $\Delta$ and order at least $\zeta\Delta$, yet $\chie\ge\chi=\Delta+1$. Some order condition also remains necessary for every $\ell$: the complete multipartite graph $K_{1,\dots,1,s}$ with $\ell-1$ singleton parts has $n=s+\ell-1$ and $\Delta(G)=n-1$ for fixed $\ell$; its $\ell-1$ universal vertices must form singleton classes, equitability then forces all classes to have size at most $2$, and $\chie\ge\ceil{n/2}=\bigl(\tfrac12+o(1)\bigr)\Delta(G)$. For $\ell=2$ this is the star.
\end{remark}

\section{A linear-time algorithm}\label{sec:alg}

We now prove Theorem~\ref{thm:alg}. Assume $G$ is given by adjacency lists together with a proper $\ell$-coloring; for $\ell=2$ the latter is free, a bipartition being computable by breadth-first search in $O(n+m)$ time, and isolated vertices may be placed on either side. Identifying a largest part $B$ costs $O(n)$. The numbers $k,q,r,t,\beta,L,H$, the greedy pass of Lemma~\ref{lem:multinormal} --- which performs $O(k)$ arithmetic steps in total, since it allocates at most $k$ classes --- and the balanced anchor sizes of Observation~\ref{obs:distribute} are computed in $O(k+\ell)\subseteq O(n)$ operations; checking (A0)--(A3) is $O(1)$. It remains to implement the three phases of Lemma~\ref{lem:anchor} in linear time. Phases~1 and~3 partition subsets of the parts $V_j$ and of $B$ into consecutive blocks of prescribed sizes, which takes $O(n)$. For Phase~2, maintain the unused vertices of $B$ in a doubly linked list, together with an array $\mathrm{stamp}[\,\cdot\,]$ indexed by $V(G)$, initialized to $0$; entries of vertices outside $B$ may be written but are never consulted. When mixed class $D_i$ ($i$-th in the phase, $i=1,\dots,t$) is built: first select its anchor set $R_i$ (the next $\rho_i$ vertices of the residue of the part $V_{j(i)}$ it is labeled by), and set $\mathrm{stamp}[z]\leftarrow i$ for every $z\in N(R_i)$ by scanning the adjacency lists of $R_i$; then walk the linked list from its head, skipping vertices with $\mathrm{stamp}=i$ and moving vertices with $\mathrm{stamp}\ne i$ from the list into $D_i$, until $s_i-\rho_i$ vertices have been collected. Lemma~\ref{lem:anchor} guarantees the walk succeeds.

The total cost of stamping is $\sum_{i=1}^{t}\sum_{v\in R_i}\deg(v)\le 2m$ (at most $m$ for bipartite $G$, the anchors all lying in one part), because the anchor sets $R_i$ are pairwise disjoint. Each step of a list walk either deletes a vertex from the list --- which happens at most $n$ times in total, as deleted vertices never return --- or skips a vertex $z$ with $\mathrm{stamp}[z]=i$. In the latter case $z\in N(R_i)$, so there is an edge from $R_i$ to $z$; since each walk passes each list position at most once and the sets $R_i$ are pairwise disjoint, distinct skips are charged to distinct edges of $G$. Hence the walks cost $O(n+m)$ overall, and the whole algorithm runs in time $O(n+m)$.

The implementation applies verbatim for every $k$ with $\ceil{\gamma\sqrt\Delta}\le k\le\Delta$, which is exactly the range covered in Step~2 of the proof of Theorem~\ref{thm:upper}; the reserved number $t$ is computed once, from $k_1$, and does not change with $k$. For $k\ge\Delta+1$ the graph is equitably $k$-colorable by the Hajnal--Szemer\'edi theorem, and such a coloring is computable in polynomial time~\cite{KierKost08,KKMS10}, though not within the linear bound above; in that range, however, the number of colors is already at least $\Delta+1$, so the bound of Theorem~\ref{thm:upper} carries no information. \qed

\section{Concluding remarks}\label{sec:concl}

We conclude by highlighting three directions left open by the present results.

The first is the hypothesis on the order. For $\chie$ on bipartite graphs the threshold is settled: it is exactly $n>\Delta$, by Theorem~\ref{thm:smallzeta} and the star. What Theorem~\ref{thm:smallzeta} loses is the logarithm: it gives $O_\zeta(\sqrt\Delta)$ where Theorem~\ref{thm:multi} gives $O(\sqrt{\Delta/\ln\Delta})$, and only for $\zeta>\ell$. The gap between the two motivates the following conjecture.

\begin{conjecture}\label{con:smallzeta}
For every $\zeta>1$ there is $C(\zeta)$ such that every bipartite graph $G$ with $n\ge\zeta\Delta$ and $\Delta$ sufficiently large satisfies $\chie(G)\le C(\zeta)\sqrt{\Delta/\ln\Delta}$.
\end{conjecture}

The origin of the requirement $\zeta>\ell$ in Theorem~\ref{thm:multi} is as follows (cf.\ Remark~\ref{rem:special2}), and it indicates what an attack on the conjecture must supply. The construction reserves $t\approx\theta k$ mixed classes, hence about $\theta n$ vertices of $B$, and bounds $b$ below only by $n/\ell$, so the reservoir available to the mixed classes is about $(\tfrac1\ell-\theta)n$. This must exceed $\Delta$, since a single anchor already forbids $\Delta$ vertices of $B$; letting $\theta\downarrow0$ therefore forces $\zeta>\ell$, and $\Gamma(\ell,\zeta)$ diverges as $\zeta\downarrow\ell$ because $\theta$ must then be taken close to $0$. The prescribed-anchor construction of Theorem~\ref{thm:upper} loses a further factor $2$ to the floor in $L=\floor{\beta/(\Delta-1)}$.

The bound $b\ge n/\ell$ is the natural point of attack. It is tight only when all $\ell$ parts have the same size, whereas the residues that must be absorbed are largest when they do not; a preliminary partition of $B$, or a construction that reuses vertices of $B$ between mixed classes, appears to us the natural route. What cannot help is a sharper form of the averaging lemma: \eqref{eq:rhochoice} already uses it exactly, and the requirement that the reservoir exceed $\Delta$ survives any improvement to it. We note also that for the threshold $\chies$ no such conjecture is available: by Proposition~\ref{prop:threshsharp} the order hypothesis for $\chies$ must exceed $2$, and we do not know the truth between $2$ and the $4$ of Theorem~\ref{thm:upper}.

The second is the constant factor that separates Theorems~\ref{thm:multi} and~\ref{thm:lower}: the lower constant is independent of $\zeta$, and the upper constant is bounded uniformly once $\zeta/\ell$ is bounded away from $1$. Determining the extremal function to within $1+o(1)$ would require work at both ends. Remark~\ref{rem:truth} shows that the domination obstruction of Section~\ref{sec:lower} is already tight for the construction that realizes it: it caps a class at $\Theta(\zeta\ln\Delta)$ vertices of one small part, and Theorem~\ref{thm:lower} takes those parts as large as equitability permits. Closing the gap therefore requires either a different obstruction, or an upper-bound argument in which one class absorbs unboundedly many vertices of a small part at a cost below $\Delta$ per anchor, which the anchor counting used here cannot achieve.

The third is the dependence on $\ell$, where the two bounds bracket the truth but do not meet: Theorem~\ref{thm:multi} gives $O(\ell^{3/2})$ and Theorem~\ref{thm:lower} gives $\Omega(\ell^{1/2})$. Both exponents are artifacts of their arguments. In the upper bound one factor $\ell$ comes from the reservation $t\approx\theta k$ with $\theta=\Theta(1/\ell)$ and one factor $\sqrt{\ell-1}$ from the $\ell-1$ residues to be absorbed. In the lower bound the factor $\sqrt{\ell-1}$ comes from making the $\ell-1$ small parts pairwise complete, so that no class meets two of them, and then counting: each part contributes $|V_j|/s_0$ classes and no class is counted twice. Neither mechanism appears canonical, and we regard both endpoints as plausible. Deciding between $\ell^{1/2}$ and $\ell^{3/2}$ is, in our view, the most interesting question the paper raises. Any attack on it must remove one of the two rigidities just described --- through a construction whose mixed classes draw their anchors from more than one part at a time, or through an obstruction that couples the small parts by less than complete joins. A related unknown is whether $\chie$ is linear in $\Delta$ for some family with $\chi(G)$ just above the threshold of Theorem~\ref{thm:uniform}: the examples with $\chie=\Omega(\Delta)$ that satisfy the order hypothesis, such as the disjoint union of Remark~\ref{rem:sharp}, all have $\chi(G)=\Omega(\Delta)$.

Two further questions are of a different character. The list analogue is open: is every bipartite graph with $n\ge\zeta\Delta$ equitably $k$-choosable for $k=O_\zeta(\sqrt\Delta)$? The construction here uses the global structure of both sides --- which classes lie in $A$, which in $B$, and how the residue is distributed --- and it is not clear how to adapt any of that to lists. And on the algorithmic side, Theorem~\ref{thm:alg} shows that equitable coloring is solvable in linear time in the regime studied here, in contrast with the general problem; the complexity of computing $\chie$ exactly on large bipartite graphs remains open, as does whether the anchor sets of Lemma~\ref{lem:avg} can be found in linear time. The corresponding search problem for balanced colorings is limited by a barrier of the kind familiar from random constraint satisfaction~\cite{Chakraborti23}; whether anything similar applies here we do not know.

\appendix

\section{Numerical experiments}\label{app:exp}

The algorithm of Section~\ref{sec:alg}, the multipartite construction of Section~\ref{sec:multi}, and the random construction of Section~\ref{sec:lower} were implemented. For every coloring produced, it was checked that all classes are independent, that they partition $V(G)$, that class sizes differ by at most one, and that exactly $r$ classes have size $q+1$. All results of this paper are proved in full in the preceding sections; nothing below is used in any proof, and the computations serve only as independent confidence checks on the constructions and the constants. Representative outcomes follow.

\subsection*{The prescribed-anchor construction} The construction of Section~\ref{sec:upper} was run through a single code path for all $\ell$. For $\ell=2$ the inputs were four families of bipartite graphs --- $K_{\Delta,\Delta}$ together with isolated vertices, random $\Delta$-regular bipartite graphs, disjoint unions of stars $K_{1,\Delta}$, and the random instance of Section~\ref{sec:lower} itself --- with $n=\ceil{\zeta\Delta}$, for $\Delta\in\{256,400,900\}$ and
\[
(\zeta,\gamma)\in\bigl\{(25,9),\,(30,8),\,(50,6),\,(15,12),\,(25,5)\bigr\},
\]
at $k=\ceil{\gamma\sqrt\Delta}$. For $\ell\in\{3,4,5\}$ the inputs were random $\ell$-partite graphs with $\ell-1$ small parts and maximum degree $\Delta$, for $\Delta\in\{256,400\}$ and $\zeta\in\{25,50\}$, with $\gamma$ exceeding the critical value of Theorem~\ref{thm:upper} by $15\%$. Every run returned a valid equitable coloring in the sense just described.

The pair $(25,5)$ tests the conditions near their critical value: for $\ell=2$ and $\zeta=25$ the critical value is $4\sqrt{25/21}=4.364\ldots$, and $\gamma=5$ exceeds it by less than $15\%$; every run nevertheless succeeded. For instance, at $\Delta=1024$ and $\gamma=5$ the bound uses $k=160$ colors against the $\ceil{\Delta/2}+1=513$ of the best linear bound, and $\ceil{5\sqrt\Delta}\le\ceil{\Delta/2}+1$ for every $\Delta\ge95$.

\subsection*{Threshold range} On the bipartite families above, at $\Delta\in\{256,400\}$ and $\zeta\in\{10,25\}$, the construction with $t$ frozen at $\floor{k_1/(2\ell)}$ succeeded for \emph{every} $k$ with $k_1=\ceil{9\sqrt\Delta}\le k\le\Delta$, as Theorem~\ref{thm:upper} predicts. Conditions (A0)--(A3) were also evaluated directly across that interval, confirming the monotonicity of Lemma~\ref{lem:mono}: $q$ is nonincreasing, $\beta$ and $L$ nondecreasing, and $\ceil{q/H}$ nonincreasing in $k$ in every instance.

\subsection*{Normalized forms} The conclusions of Lemma~\ref{lem:multinormal} were verified exhaustively for $\ell=2$ on all $3.07\times10^{5}$ tuples $(k,q,r,t,a_1,b)$ with $2\le k<16$, $1\le q<10$, $0\le r<k$, $1\le t\le k$ and $a_1+b=n$ satisfying \eqref{eq:Hcond}, and checked on $3\times10^{5}$ random parameter tuples $(\ell,k,q,r,t,a_1,\dots,a_{\ell-1},b)$ with $\ell\le7$, $k\le300$ and $q\le500$ satisfying \eqref{eq:Hcond}: the bounds $M_j\le q$, $\sum_j(u_j+v_j)\le k-t$, $\sum_ju_j\le r$, $\sum_jv_j\le k-r$ and $0\le r-\sum_ju_j\le t+y$ held in every instance, the first with equality attained.

\subsection*{The explicit conditions imply the abstract ones} Conditions (C0)--(C2) of Theorem~\ref{thm:upper} are stated for $k_1$ alone, while Proposition~\ref{prop:core} asks for (A0)--(A3), which involve the part sizes as well. The implication was checked directly at the worst admissible part sizes, namely $b=\ceil{n/\ell}$ and $n=\ceil{\zeta\Delta}$: over $3840$ parameter points with $\ell\in\{2,3,4,5\}$, $\zeta$ ranging from $2\ell+0.05$ to $10\ell$, $\gamma$ from $1.02$ to $3$ times its critical value, and $\Delta$ swept geometrically over more than ten orders of magnitude from the smallest value at which (C0)--(C2) hold, conditions (A0)--(A3) held at every point. In particular they hold throughout $2\ell<\zeta<4\ell$, the range in which the constant $2\ell$ of Theorem~\ref{thm:upper} is binding.

\subsection*{Little room} Lemma~\ref{lem:arith} was verified exhaustively on all $1.6\times10^{6}$ pairs $(n,a)$ with $n<2500$ and $a\le n/2$ for which $q:=\floor{\sqrt{\min\{a,n/5\}}}$ satisfies its hypotheses: in every case the prescribed $k=\floor{n/q}-1$ admits a split, with no failures. The case division in the proof of Theorem~\ref{thm:smallzeta} was then checked directly: for $\zeta\in\{1.2,1.5,2,3\}$ and $\Delta\in\{2000,10^4\}$, every admissible pair of part sizes $a\le b$ with $b\ge\Delta$ falls into Case~1 or Case~2, and the resulting number of colors never exceeded $1.06\,C(\zeta)\sqrt\Delta$.

\subsection*{Adaptive anchoring} Lemma~\ref{lem:avg} was tested against its own guarantee on three families of bipartite graphs --- neighborhoods uniformly random, concentrated in blocks, and translates of a fixed interval --- for $\Delta\in\{2000,4000\}$ and $\zeta\in\{8,20\}$, at a value of $\rho$ larger than the proof uses: in every case the best of $300$ sampled $\rho$-subsets left more than the guaranteed $|B'|(1-\tau\Delta/|B'|)^{\rho}$ vertices of $B'$ free. The margin indicates the following. For uniformly random neighborhoods the degrees $d_v$ are nearly equal, so Jensen's inequality is almost tight and the best sample exceeded the guarantee by only $16\%$ to $26\%$; for the two structured models it exceeded it by factors between $3.7$ and $10.8$. The guarantee is thus close to best possible in general, and far from it on any instance with unequal degrees. The construction of Theorem~\ref{thm:multi} was then run for
\[
(\Delta,\zeta)\in\bigl\{(2\cdot10^4,12),\,(5\cdot10^4,12),\,(5\cdot10^4,25)\bigr\}
\]
at $k=\ceil{\gamma\sqrt{\Delta/\ln\Delta}}$ with $\gamma\in\{11,13\}$, parameters satisfying \textup{(U1)}. Each run placed the whole residue using between $23\%$ and $36\%$ of the $t$ available mixed classes, and in four of the six the resulting $k$ was below the $\ceil{4\sqrt\Delta}$ that a prescribed-anchor construction would need.

\subsection*{Lower bound} The parameters \eqref{eq:lbparams} were instantiated directly. For $\ell=2$, $\Delta=2500$ and $\zeta=5$ one gets $s_0=79$, $K=8$, $a=632$, $b=12\,500$, $x_0=50$ and $\floor{n/K}=1641$, so the requirement $a+x_0<\floor{n/K}$ of \eqref{eq:gap} holds with a wide margin; over $200$ sampled sets $S\in\binom{V_1}{s_0}$ the largest number of undominated vertices of $B$ was $0$, well below $x_0$. The same held for $(\Delta,\zeta)\in\{(2500,2),(10^4,2),(10^4,5)\}$, and in every case the maximum degree of $B$ was far below $\Delta$, confirming $\Delta(G)=\Delta$. For $\ell\in\{3,5,10\}$ and $\Delta$ ranging over $10^{6},10^{8},10^{10}$, the four structural requirements of \eqref{eq:lbparams} --- $(\ell-1)a\le\Delta$, $0<\Delta'\le b$, $a+x_0<\floor{n/K}$ and $(\ell-1)a\ge Ks_0$ --- were checked, and all hold; the domination sampling was repeated for $\ell\in\{3,5\}$ at $\Delta\in\{2500,10^4\}$ with the same outcome. These are samples, not certificates: the proof quantifies over all $\binom{a}{s_0}$ sets, which the union bound in Section~\ref{sec:lower} handles. It was also verified numerically that each inequality behind that union bound holds, namely $a\le\Delta$, $b\ge\Delta$, $\mu\le2\zeta$, $a+x_0<\floor{n/K}$ and $s_0\ln a<x_0\ln\bigl(x_0/(e\mu)\bigr)$, for $\Delta$ ranging over $10^{4},\dots,10^{12}$ and $\zeta$ over $\{2,5,25,100\}$. The last is the binding one: it holds for all four values of $\zeta$ once $\Delta\ge10^{10}$, and fails for $\zeta=100$ at $\Delta=10^{8}$, so the requirement that $\Delta$ be large in terms of $\zeta$ is polynomial rather than iterated-logarithmic, a consequence of the enlarged $x_0$.

\subsection*{Explicit thresholds} The smallest $k_0$ from which the explicit sufficient conditions (C0)--(C2) hold, and the resulting $\Delta_0\approx(k_0/\gamma)^2$, with $\gamma$ taken $15\%$ above the critical value $\gamma_{\mathrm{crit}}=2\ell\sqrt{\zeta(\ell-1)/(\zeta-2\ell)}$:

\begin{center}
\begin{tabular}{ccrrr}
\toprule
$\ell$ & $\zeta$ & $\gamma_{\mathrm{crit}}$ & $k_0$ & $\Delta_0$ \\
\midrule
$2$ & $25$ & $4.364$ & $46$ & $84$ \\
$2$ & $50$ & $4.170$ & $39$ & $67$ \\
$3$ & $25$ & $9.733$ & $128$ & $131$ \\
$3$ & $50$ & $9.045$ & $96$ & $86$ \\
$4$ & $50$ & $15.119$ & $188$ & $117$ \\
$5$ & $60$ & $21.909$ & $300$ & $142$ \\
\bottomrule
\end{tabular}
\end{center}

\noindent
All of these thresholds belong to Theorem~\ref{thm:upper}; they say nothing about Theorem~\ref{thm:multi}, whose $\Delta_0$ is of a completely different size (Remark~\ref{rem:delta0}). They are small: for bipartite graphs of order at least $25\Delta$ the construction is certified for every $\Delta\ge84$, and since $t$ is frozen the same certificate covers every $k$ up to $\Delta$ at no extra cost. The construction itself succeeds well before these worst-case guarantees.

\end{document}